\author{Mirjana Mikala\v{c}ki
  \and Milo\v{s} Stojakovi\'{c}}
\title{Fast strategies in biased Maker--Breaker games \thanks{Research partly supported by Ministry of Education, Science and Technological Development, Republic of Serbia (Grant No.\ $174019$) and Provincial Secretariat for Higher Education and Scientific Research, Province of Vojvodina (Grant No.\ $142$-$451$-$2787/2017$).}}
\affiliation{
University of Novi Sad, Faculty of Sciences, Department of Mathematics and Informatics, Serbia.}
\keywords{Maker--Breaker games, positional games, biased games}
\DeclareFontFamily{OT1}{pzc}{}
\DeclareFontShape{OT1}{pzc}{m}{it}{<-> s * [1.10] pzcmi7t}{}
\DeclareMathAlphabet{\mathpzc}{OT1}{pzc}{m}{it}
\newtheorem{theorem}{Theorem}[section]
\newtheorem{lemma}[theorem]{Lemma}
\newtheorem{claim}[theorem]{Claim}
\newtheorem{corollary}[theorem]{Corollary}
\numberwithin{equation}{section}
\newcommand\cref[1]{Corollary~\ref{cor:#1}}
\newcommand\cG{{\mathcal G}}
\newcommand\cP{{\mathcal P}}
\newcommand\cE{{\mathcal E}}
\newcommand{\danger}{\mathsf {dang}}
\newcommand{\avdanan}{\overline{\danger}}
\begin{document}
\publicationdetails{20}{2018}{2}{6}{4033}
\maketitle
\begin{abstract}
  We study the biased $(1:b)$ Maker--Breaker positional games, played on the edge set of the complete graph on $n$ vertices, $K_n$. Given Breaker's bias $b$, possibly depending on $n$, we determine the bounds for the minimal number of moves, depending on $b$, in which Maker can win in each of the two standard graph games, the Perfect Matching game and the  Hamilton Cycle game.
\end{abstract}

\section{Introduction}

In a Maker--Breaker positional game, a finite set $X$ and a family $\cE$ of subsets of $X$ are given, and two players, Maker and Breaker, alternate in claiming unclaimed elements of $X$ until all the elements are claimed, with Breaker going first. Maker wins if she claims all elements of a set from $\cE$, and Breaker wins otherwise. The set $X$ is referred to as the \emph{board}, and the elements of $\cE$ as the \emph{winning sets}. As Maker--Breaker positional games are finite games of perfect information and no chance moves, we know that in every game one of the players has a winning strategy. More on various aspects of positional game theory can be found in the monograph of~\cite{BeckBook} and in the recent monograph of~\cite{HKSSbook}.

We are interested in positional games on graphs, where the board $X$ is the edge set of a graph, and we will mostly deal with games played on the edge set of the complete graph $E(K_n)$. Probably three most standard positional games are \emph{the Connectivity} game, where Maker wants to claim a spanning tree, \emph{the Perfect Matching} game, where the winning sets are all perfect matchings of the base graph, and \emph{the Hamilton Cycle} game, where Maker's goal is to claim a Hamilton cycle.

Once the order $n$ of the base graph gets large, it turns out that Maker can win in each of the three mentioned games in a straightforward fashion. But our curiosity does not end there, as there are several standard approaches to make the setting more interesting to study. One of them is the so-called \emph{biased games}, where Breaker is given more power by being allowed to claim more than one edge per move. The other approach we focus on is the \emph{fast win} of Maker, where the question we want to answer is not just if Maker can win, but also how fast can she win.

Given a positive integer $b$, in the $(1:b)$ biased game Breaker claims $b$ edges in each move, while Maker claims a single edge. The parameter $b$ is called the bias. Due to the ``bias monotonicity" of Maker--Breaker games, it is straightforward to conclude that for any positional game there is some value $b_0 = b_0(n)$ such that Maker wins the game for all $b < b_0(n)$, while Breaker wins for $b \geq b_0(n)$ (see~\cite{HKSSbook} for details). We call $b_0(n)$ the \emph{threshold bias} for that game.

The biased games were first introduced and studied by~\cite{CE}, and some thirty years later the papers of~\cite{GS} and~\cite{K} finally located the leading term of the thresholds for the games of Connectivity, Perfect Matching and Hamilton Cycle, which turned out to be $n/\ln n$ for all three games.

Moving on to the concept of fast winning, when we know that Maker can win an unbiased game, a natural question that we can ask is -- what is the minimum number of moves for Maker to win? Questions of this type appeared frequently, often as subproblems, in classical papers on positional games, and the concept of fast Maker's win was further formalized by~\cite{HKSS2}. It is not hard to see that Maker can win the unbiased Connectivity game as fast as the size of the winning set allows, in $n-1$ moves. For the other two games it takes her a bit longer (one move longer, to be more precise) -- she can win the unbiased Perfect Matching game in $n/2 +1$ moves (for $n$ even) as shown by~\cite{HKSS2}, and the unbiased Hamilton Cycle game in $n +1$ moves, shown by~\cite{HS09}, and in both cases that is the best she can do. We note that some research has also been done on fast Maker's win in the unbiased $k$-Connectivity, Perfect Matching and Hamilton Cycle games played on the edge set of a random graph, see~\cite{CFKL}.

Knowing how fast Maker can win, and how to win fast, is important, as this often helps us when looking at other positional games. Indeed, there are numerous examples where a player's winning strategy may call for building a certain structure \emph{quickly} before proceeding to another task. Also, one of very few tools that proved to be useful when tackling the so-called strong positional games are the fast Maker's winning strategies, see~\cite{CM,FH1,FH2}.

Our goal in this paper is to combine the two presented concepts -- the biased games and the fast winning, looking into the possibilities for Maker to \emph{win fast in biased games}. In other words, given a game $\cG$ and a bias $b$ such that Maker can win the $(1:b)$ biased game, we want to know in how many moves he can win the game. One obvious lower bound for the duration of the game is the size of the smallest winning set, and that is $n-1$ for the Connectivity game, $\frac n2$ for the Perfect Matching game, and $n$ for the Hamilton Cycle game.

It is not hard to see that in the Connectivity game Maker does not ever need to close a cycle, and therefore, even in the biased game, whenever she can win she can do so in exactly $n-1$ moves. As for the  Perfect Matching game, the following theorem gives fast Maker's win for most of the range of biases for which Maker can win, up to the (order of the) threshold for Maker's win in the game.

\begin{theorem} \label{thm:perfM}
There exist $\alpha>0$ and $C>0$ such that for every $2\leq b\leq \frac{\alpha n}{\ln n}$, Maker
can win the $(1:b)$ Perfect Matching game played on
$E(K_n)$ within $\frac n2+Cb\ln b$ moves, for large
enough $n$.
\end{theorem}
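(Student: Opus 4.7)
The plan is to split Maker's strategy into two phases: an \emph{expansion phase} that rapidly builds a near-perfect matching, and a short \emph{completion phase} that absorbs the leftover unmatched vertices.

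Set $r = C_1 b \ln b$ for a constant $C_1$ to be tuned. In the expansion phase, Maker plays $n/2 - r$ moves, each adding one edge to a growing matching $M$ and thereby removing two vertices from the unmatched set $U$. Write $d_B(v)$ for Breaker's degree at $v \in U$ restricted to $U$. Maker's strategy is reactive: if some $v \in U$ has $d_B(v) > |U|/10$, Maker declares $v$ \emph{dangerous} and matches $v$ to a safe partner $u \in U$ (one with small $d_B(u)$ and $uv$ unclaimed); otherwise, Maker claims any unclaimed edge between two vertices of $U$ with small $d_B$. The invariant to maintain is $d_B(v) \le |U|/10$ for every $v \in U$ at the end of each Maker move.

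In the completion phase, Maker inherits a state with $|U| = 2r$ and every $v \in U$ having $d_B(v) \le r/5$. He completes $M$ to a perfect matching, either directly via unclaimed edges inside $U$ or via length-$3$ augmenting paths $v-x-y-u$ with $v, u \in U$ and $xy \in M$. Each move or pair of moves matches two new vertices from $U$, so the completion phase finishes in $O(r)$ moves, bringing the total to $(n/2 - r) + O(r) = n/2 + C b\ln b$ for a suitable constant $C$.

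The counting estimates are standard. In the expansion phase, after $t$ Maker moves Breaker has claimed $tb$ edges, so $\sum_{v \in U} d_B(v) \le 2tb$; at most $O(tb/|U|)$ vertices are dangerous at any moment, which for $t \le n/2 - r$ and $|U| \ge 2r$ both bounds the number of defusing moves Maker must spend and guarantees each dangerous vertex can find a safe partner. In the completion phase, the subgraph of unclaimed edges inside $U$ initially has minimum degree at least $9|U|/10 - 1$, and Breaker's remaining edge budget of $O(bn)$ is too small relative to $r^2$ to block all matching options before Maker finishes.

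The main obstacle is the refinement from an $O(b \ln n)$ overhead, which a naive Erd\H{o}s--Selfridge-style potential argument on the full board would yield, to the claimed $O(b \ln b)$. This requires terminating the expansion phase as soon as $|U|$ reaches $\Theta(b \ln b)$ and showing that the residual completion phase contributes only $O(b \ln b)$ more moves, independent of $n$. It also requires a careful choice of the danger threshold so that the invariant $d_B(v) \le |U|/10$ survives until the switch, which relies on the bias bound $b \le \delta n/(100 \ln n)$ to keep the density of dangerous vertices under control all the way down to $|U| = 2r$.
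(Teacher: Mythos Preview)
Your two-phase decomposition with the switch at $|U| = \Theta(b\ln b)$ is exactly the structure of the paper's proof, and the invariant you aim for in the expansion phase (Breaker's degree inside $U$ stays at most a small constant fraction of $|U|$) is precisely what the paper establishes as Claim~\ref{claim1}(ii). However, both phases contain genuine gaps.

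\textbf{Expansion phase.} Your justification of the invariant is the line ``$\sum_{v\in U} d_B(v)\le 2tb$, so at most $O(tb/|U|)$ vertices are dangerous at any moment.'' This bounds the \emph{number} of dangerous vertices via the average, but it does not show Maker can keep up with defusing: several vertices can cross the threshold in a single Breaker move, new ones can appear each round, and as $|U|$ shrinks the threshold $|U|/10$ shrinks with it. The paper's proof of the analogous statement is not a one-liner: Maker always matches the vertex of \emph{maximum} Breaker-degree in $U$ (not a reactive ``defuse when dangerous'' rule), and the bound $\Delta(B[U_i])\le\delta|U_i|$ is obtained by a backward-in-time potential argument (the recursive sets $R_k$ in Claim~\ref{claim1}(ii)) which shows that any violation would force the \emph{average} degree to exceed $2b$ at some earlier stage, contradicting the separately-proved bound $D_i\le 2b$.

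\textbf{Completion phase.} This is the more serious gap. Your budget comparison ``$O(bn)$ is too small relative to $r^2$'' is the wrong way round: for $b$ of constant order, $bn=\Theta(n)$ while $r^2=\Theta(1)$. Even with the intended budget (Breaker's $O(rb)$ edges during completion), a crude edge count does not prevent Breaker from focusing on a single vertex of $U$ and claiming all of its $\Theta(r)$ free edges inside $U$ in $\Theta(r/b)\ll r$ rounds. Your fallback of length-$3$ augmenting paths $v$--$x$--$y$--$u$ through a matched pair $xy$ also fails: your invariant only controls $d_B(v,U)$, not the total $d_B(v)$, so throughout the expansion phase Breaker is free to claim every edge from $v$ to already-matched vertices without ever triggering the danger rule. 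By the time completion begins, $v$ can have $d_B(v,V\setminus U)$ as large as $n-2r$, leaving no augmenting paths through $M$ at all.

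The paper sidesteps this by invoking a heavy black box: once $|U|=m=\Theta(b\ln b)$ with $\Delta(B[U])\le\delta m$, the bias satisfies $b\le(1-\varepsilon)m/\ln m$, and an adaptation of Krivelevich's Hamilton cycle theorem (Theorem~\ref{thm:HamNotFast}) lets Maker build a Hamilton cycle on $(K_n\setminus B)[U]$ in $O(m)=O(b\ln b)$ further moves, from which the perfect matching on $U$ is read off. Something of comparable strength is needed here; a direct matching or augmenting-path argument on $U$ does not suffice as stated.
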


Moving on to the Hamilton Cycle game, we can prove the following two results for fast Maker's win. The first one is more powerful, but it applies only for small values of bias, while the second one covers a wider range of bias.

\begin{theorem}\label{thm:HamVerySmall}
There exists $C>0$ such that for $b\geq 2$ and $b=o\left(\frac{\ln n}{\ln\ln n}\right)$, Maker can win the $(1:b)$ Hamilton Cycle game played on $E(K_n)$ within $n+Cb^2\ln b$ moves, for large
enough $n$.
\end{theorem}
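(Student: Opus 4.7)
The plan is to implement the P\'osa rotation--extension technique efficiently in the biased setting, using only $O(b^2 \ln b)$ Maker moves beyond the bare $n$ edges of a Hamilton cycle. Maker's strategy splits into three parts: \emph{extension}, \emph{rotation}, and \emph{closing}.

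During the extension part, Maker grows a Hamilton path $P$ one vertex at a time from a fixed endpoint. While the set $R$ of vertices outside $P$ has size $|R| > b$, the current endpoint $v$ of $P$ has at least $|R| - b > 0$ unclaimed edges to $R$ (since Breaker claims at most $b$ edges between two consecutive Maker moves), so a direct extension is always available and costs a single Maker move. In parallel, Maker occasionally claims ``chord'' edges between interior path vertices in order to install an auxiliary P\'osa-expansion property, ensuring that sets of reachable endpoints (under rotations) of size at most a constant fraction of $|V(P)|$ roughly double after one rotation. Only $O(b^2 \ln b)$ chord moves are inserted over the whole phase, whenever local structure would otherwise be too thin.

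Once $|R|$ drops to $O(b)$, direct extensions may fail and Maker switches to rotations: claiming a Maker edge $v_k v_i$ with $v_i$ an interior vertex of $P$ produces a new endpoint $v_{i+1}$. By the chord-assisted P\'osa expansion, $O(\log b)$ successive rotations suffice to grow the set of reachable endpoints beyond $b$, which then forces at least one such endpoint to have an unclaimed edge into $R$ and thus enables a direct extension. Summed over the final $O(b)$ extensions, this rotation part costs $O(b^2 \ln b)$ additional Maker moves. The closing part, where the resulting Hamilton path is turned into a cycle, is handled in an entirely analogous manner and contributes at most another $O(b^2 \ln b)$ moves. The three contributions combine to $n + C b^2 \ln b$ moves.

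The principal obstacle is controlling the P\'osa-expansion of Maker's auxiliary chord structure in the face of a strategic Breaker who, in this tight move budget, has few moves to spare but can still concentrate them on a single threat. This will require a careful bookkeeping argument reminiscent of the one used for the unbiased Hamilton cycle game in $n+1$ moves, suitably modified to absorb the $b$-sized bursts of Breaker moves between Maker's consecutive turns and to guarantee that expansion persists throughout both the rotation and closing parts, which is where the hypothesis $b=o(\ln n/\ln\ln n)$ will be used.
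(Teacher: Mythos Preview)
Your extension argument has a genuine gap. The claim that the current endpoint $v$ has at least $|R|-b$ unclaimed edges to $R$ ``since Breaker claims at most $b$ edges between two consecutive Maker moves'' ignores all Breaker edges accumulated at $v$ (and at vertices still in $R$) over \emph{previous} rounds. Concretely, Breaker can fix a single vertex $u$ and, in every round, claim the edge from the current endpoint to $u$ together with $b-1$ further edges incident to $u$. Maker is then never able to extend to $u$, and after about $(n-1)/b$ rounds $u$ is completely isolated in the free graph while your path still contains only $\approx n/b$ vertices. The vertex $u$ is stranded in $R$ permanently, and your chord edges, being internal to $P$, cannot help. So the single-growing-path scheme, without any mechanism that controls Breaker's degree on vertices outside $P$, does not even produce a Hamilton path.

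The paper takes a rather different route whose main point is precisely such a degree-control mechanism. Maker maintains a shrinking collection $\mathcal{P}$ of vertex-disjoint paths (initially the $n$ singletons) and in each move merges two of them by joining a pair of endpoints; in alternating moves he always touches the endpoint of largest Breaker degree (overall, respectively within the endpoint set). A Box-game/danger-function argument then guarantees that every surviving endpoint has Breaker degree $O(b\ln n)$ throughout---this is exactly what prevents the ``isolate one vertex'' attack above, since a vertex under attack gets buried inside a path early. Only once $\mathcal{P}$ is down to $O(b\ln b)$ long paths does Maker switch to P\'osa rotations, using at most $2b+1$ rotations to glue each remaining pair and finally to close the cycle; the $O(b^2\ln b)$ overhead comes from $O(b)$ rotations times $O(b\ln b)$ gluings, not from installing a chord-based expansion structure. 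The hypothesis $b=o(\ln n/\ln\ln n)$ is used to keep the total Breaker degree picked up during the later stages below the $O(b\ln n)$ ceiling established in the first stage.
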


\begin{theorem}\label{thm:HamSmall}
There exist $\delta > 0$ and $C>0$, such that for $b\leq\delta \sqrt{\frac{n}{\ln^5 n}}$ Maker can win the $(1:b)$ Hamilton Cycle game played on $E(K_n)$ within $n+ C b^2\ln^5 n$ moves, for large
enough $n$.
\end{theorem}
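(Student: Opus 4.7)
The plan is to have Maker first build, in roughly $n + O(b^2 \ln^5 n)$ moves, a subgraph $M \subseteq K_n$ consisting of his claimed edges that is a ``good expander'' in the sense of P\'osa, and then complete $M$ to a Hamilton cycle using a constant number of additional moves via the rotation--extension technique. This mirrors the two-stage template already used for the unbiased fast Hamilton cycle game, but adapted to pay the biased-game overhead of $O(b^2 \ln^5 n)$.

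Call $M$ a good expander if it has a prescribed minimum degree, every vertex subset $S$ of size at most a threshold $k = k(n,b)$ satisfies $|N_M(S)\setminus S| \geq 2|S|+1$, and every two disjoint subsets of size at least $k$ are joined by an edge of $M$. Standard consequences of P\'osa's rotation lemma then give that $M$ contains a Hamilton path, and that if $M$ is not yet Hamiltonian there are $\Omega(n^2)$ boosters, i.e.\ non-edges whose addition either strictly extends the longest path or closes a Hamilton cycle. To build such an $M$ efficiently I would have Maker play a potential-based strategy in the spirit of Gebauer--Szab\'o, as suggested by the $\danger$ and $\avdanan$ macros declared in the preamble: assign a danger value $\danger(S)$ to each small vertex set $S$ that measures how close Breaker is to destroying the required expansion of $S$, and in every move play an edge that lowers the maximum danger (or the average danger $\avdanan$) as much as possible. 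Tracking how $\danger$ evolves, the usual potential-plus-union-bound estimate should yield that after at most $n + C_1 b^2 \ln^5 n$ Maker moves the graph $M$ is a good expander: the $b^2$ factor comes from the quadratic-in-bias blow-up typical of biased Erd\H{o}s--Selfridge-type arguments, and the $\ln^5 n$ factor comes from the union bound over the relevant family of ``dangerous'' small sets together with the size of the required expansion ratio.

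Once $M$ is a good expander, the second stage is short. If $M$ is already Hamiltonian, Maker is done. Otherwise, Breaker has claimed at most $b\cdot(n + O(b^2\ln^5 n)) = O(bn)$ edges, while there are $\Omega(n^2)$ booster pairs and $bn = o(n^2)$ in our bias range, so Maker can always find and play an unclaimed booster. Since $M$ already contains a Hamilton path, a single booster closes the Hamilton cycle and this stage contributes only $O(1)$ additional Maker moves.

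The main obstacle is the first stage: proving that the $\danger$-based strategy actually produces a good expander within the budget of $n + O(b^2 \ln^5 n)$ Maker moves, \emph{simultaneously} for all relevant small sets and in the face of Breaker's $b$-fold attack per round. The hypothesis $b \leq \delta\sqrt{n/\ln^5 n}$ is precisely the regime in which $b^2\ln^5 n$ is a sub-constant fraction of $n$, so that the overhead fits into $(1+o(1))n$ moves; tightening the range further would very likely require a strategy that evades the quadratic-in-$b$ dependence built into the naive box-game analysis.
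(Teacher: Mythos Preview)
Your approach has a counting obstruction that no choice of potential function can overcome. The ``good expander'' you describe requires, already from the condition $|N_M(S)\setminus S|\geq 2|S|+1$ applied to singletons, that every vertex have degree at least $3$ in Maker's graph. Hence $e(M)\geq 3n/2$, and since Maker claims one edge per move, your Stage~1 alone needs at least $3n/2$ moves --- exceeding the target $n+O(b^2\ln^5 n)$ by $n/2$, irrespective of the bias. The third property (an edge of $M$ between any two disjoint sets of size $k$) forces still more edges beyond those of a long path, so weakening the expansion ratio does not rescue the count. The P\'osa/booster machinery certifies Hamiltonicity in graphs that already have constant average degree; it is simply the wrong tool when the move budget is essentially the size of a Hamilton cycle.

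There is a second gap in your Stage~2. A connected $k$-expander is not known to contain a Hamilton path outright; the standard conclusion from P\'osa rotations is only that it has $\Omega(k^2)$ boosters, each of which either extends the longest path by one \emph{or} closes a Hamilton cycle. Thus even after Stage~1 you could need up to $n$ further booster moves, not $O(1)$ --- which is precisely how Theorem~\ref{thm:Krivi} ends up at $14n$ rather than $(1+o(1))n$.

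The paper avoids both problems by never building a global expander. Maker spends his $\sim n$ ``cheap'' moves greedily merging vertex-disjoint paths on most of $V(K_n)$, controlling Breaker's degrees at the endpoints via a Box-game argument, while in parallel he spends the $O(b^2\ln^5 n)$ ``expensive'' moves building $L=\Theta(b\ln n)$ pairwise disjoint Hamilton-connected subgraphs, each on $t=\Theta(b\ln^2 n)$ vertices, using Theorem~\ref{thm:FHK}; Theorem~\ref{thm:HaSze} (Hajnal--Szemer\'edi) guarantees a fresh Breaker-independent $t$-set each time one is needed. At the end there are $L$ long paths and $L$ gadgets, and Maker threads them alternately into a Hamilton cycle in $2L$ further moves. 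The expanders here are small local connectors, not a global structural certificate, and that is exactly what keeps Maker's edge count at $n+o(n)$.
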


Finally, when the bias is large, we can apply the following result of~\cite{K}, as it provides Maker with a win within $14n$ moves in the Hamilton Cycle game, and thus also in the Perfect Matching game.

\begin{theorem} [\cite{K}, Theorem 1]\label{thm:Krivi}
Maker can win the $(1:b)$ Hamilton Cycle game played on
$E(K_n)$ in at most $14n$ moves, for every $b\leq
\left(1-\frac{30}{\ln^{1/4}n}\right)\frac{n}{\ln n}$, for large
enough $n$.
\end{theorem}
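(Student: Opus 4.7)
The plan is to follow the classical expander-plus-booster paradigm. Maker divides his play into two consecutive stages whose total length is bounded by $14n$.

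In the first stage, lasting at most $13n$ moves, Maker builds a connected expander. More precisely, let $M$ denote Maker's graph at the end of this stage; he plays so that $M$ satisfies a P\'osa-type expansion property: every vertex set $S$ with $|S| \leq k$, for some $k = \Theta(n/\ln n)$, has $|N_M(S)\setminus S| \geq 2|S|$. To achieve this, Maker responds to each of Breaker's moves using an Erd\H{o}s--Selfridge-style potential function tailored to the ``dangerous'' vertex subsets whose expansion is threatened. The hypothesis $b \leq (1 - 30/\ln^{1/4}n)\,n/\ln n$ sits strictly below the Hamiltonicity threshold, and the $\ln^{-1/4}n$ gap provides exactly the slack needed to keep the total potential below $1$ throughout Stage~I.

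In the second stage, Maker closes his graph into a Hamilton cycle in at most $n$ further moves. Call a non-edge $e$ of a graph $H$ a \emph{booster} if $H+e$ either admits a longer longest path than $H$, or is Hamiltonian. A standard consequence of P\'osa's rotation-extension lemma is that every connected graph with the expansion of Stage~I, if not yet Hamiltonian, has $\Omega(n^2)$ boosters. Since Breaker has claimed at most $14bn = O(n^2/\ln n)$ edges in total, there are always unclaimed boosters available in each round of Stage~II; Maker simply grabs one. Each such move strictly increases his longest-path length by one or completes a Hamilton cycle, so the stage ends within $n$ moves.

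Summing the two stages yields the required bound of $14n$ moves. The main obstacle is Stage~I: one needs a potential function whose value Breaker cannot push above $1$, despite being allowed to claim $\Theta(n/\ln n)$ edges per round, and to do so within the tight budget of $13n$ Maker moves. It is precisely here that the $\ln^{-1/4}n$ slack from the threshold bias is consumed. The booster phase is comparatively robust: expansion automatically guarantees an abundance of boosters, far more than Breaker's sub-threshold bias can hope to block in a single round.
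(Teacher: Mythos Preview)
Your high-level plan---build a connected expander, then repeatedly claim boosters---is indeed Krivelevich's scheme (and the one this paper reproduces in the appendix for the variant Theorem~\ref{thm:HamNotFast}). But two concrete points in your write-up do not go through.

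\textbf{The expansion parameter is too small for the booster phase.} You set $k=\Theta(n/\ln n)$ and then assert $\Omega(n^2)$ boosters. P\'osa's lemma only gives on the order of $k^2/2$ boosters for a connected non-Hamiltonian graph with this expansion; with $k=\Theta(n/\ln n)$ that is $\Theta(n^2/\ln^2 n)$, which is \emph{smaller} than the $14bn=\Theta(n^2/\ln n)$ edges Breaker has accumulated. So the sentence ``there are always unclaimed boosters available'' is not justified. Krivelevich (and the appendix here) instead takes $k_0=n/\ln^{0.49}n$, so that $k_0^2/2=\Theta(n^2/\ln^{0.98}n)$ comfortably dominates Breaker's $O(n^2/\ln n)$ edges.

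\textbf{Stage~I is not an Erd\H{o}s--Selfridge argument on threatened subsets.} The actual mechanism is quite different. Maker plays a \emph{minimum-degree} game using the Gebauer--Szab\'o danger function $\danger(v)=d_B(v)-2b\,d_M(v)$: in each move he selects a vertex of Maker-degree below $12$ with maximum danger and claims a \emph{random} free edge incident to it. The Gebauer--Szab\'o analysis guarantees that every vertex reaches Maker-degree $12$ before Breaker claims $(1-\delta)n$ edges at it; a first-moment (union-bound) calculation over all candidate sets $A$ with $|A|\le k_0$ then shows that with positive probability the resulting degree-$12$ graph is a $k_0$-expander. Connectivity is achieved in a separate short stage of at most $n/(3k_0)$ moves. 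The $30/\ln^{1/4}n$ slack in the bias is spent in the Gebauer--Szab\'o degree bound, not in keeping an Erd\H{o}s--Selfridge potential below $1$; it is not clear that any exponential-weight potential over the (exponentially many) subsets with bad expansion could be controlled at this bias within a $13n$-move budget.
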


On the other hand, looking at the prospects of Breaker to postpone Maker's win, we can move away from the obvious lower bound in both Perfect Matching game and Hamilton Cycle game.

\begin{theorem}
\label{thm:delay} In $(1:b)$ Maker--Breaker game, for every bias $b$ and $n$ large enough, Breaker can postpone Maker's win
\begin{enumerate}[(i)]
\item in the Perfect Matching game for at least $\frac n2 + \frac b4$ moves,
\item in the Hamilton Cycle game for at least $n + \frac b2$ moves.
\end{enumerate}
\end{theorem}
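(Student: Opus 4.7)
My plan centers on a Breaker strategy that targets a fixed vertex $v \in V(K_n)$. In each move, Breaker claims $b$ unclaimed edges incident to $v$ (or all remaining edges at $v$ when fewer than $b$ remain, filling the rest of the move with arbitrary claims).

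The analysis relies on two ingredients. First, at the winning moment, Maker's graph satisfies $d_M(w) \geq 1$ for every $w$ in part (i), or $d_M(w) \geq 2$ in part (ii). Writing $s = d_M(v)$ and $m$ for Maker's total moves at the winning moment, a sum-of-degrees calculation gives
\[
2m \;=\; \sum_{w \in V} d_M(w) \;\geq\; s + (n-1)\cdot c,
\]
where $c=1$ in part (i) and $c=2$ in part (ii). This yields $m \geq n/2 + (s-1)/2$ for Perfect Matching and $m \geq n + (s-2)/2$ for Hamilton Cycle. Thus, to obtain the claimed bounds it suffices to force $s \geq b/2 + 1$ in part (i) and $s \geq b+2$ in part (ii).

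Second, I would argue that under Breaker's sustained attack on $v$, Maker is compelled to accumulate many Maker-edges at $v$. Intuitively, although Maker could try to ``commit'' by claiming only the minimum number of edges at $v$ (one for matching, two for Hamilton cycle), Breaker removes $v$-incident edges from the pool at rate $b$ per round, so Maker's $v$-edges get forced early and restricted to specific endpoints; these forced choices create structural constraints (e.g. exposing certain vertices, or leaving specific small sets of potential neighbors) that, in turn, force additional Maker-edges at $v$ to resolve.

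The main obstacle is precisely this last step: quantifying how the restriction on $v$-edges propagates into forced additional plays at $v$. The single-vertex attack is conceptually clean, but making the propagation argument rigorous may require a more involved strategy — for instance, combining the attack on $v$ with auxiliary blocking moves against Maker's matching (resp.\ Hamilton path) extensions, or attacking a small set of vertices in rotation and using an amortized analysis. The degree-sum bound provides the framework, and the technical heart of the proof lies in showing that the requisite $s$ is indeed forced.
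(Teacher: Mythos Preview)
Your degree-sum framework is correct, but the single-vertex attack cannot force the large $s$ you need, and the ``propagation'' you hope for simply does not occur. In the Perfect Matching game, Maker's response to a concentrated attack on $v$ is to spend his very first move claiming an edge $\{v,u\}$. After that, every edge Breaker claims is incident to $v$ and hence lies entirely outside $K_n[V\setminus\{v,u\}]$; Maker then builds a perfect matching on the remaining $n-2$ vertices against \emph{zero} Breaker interference, finishing in $1+(n-2)/2=n/2$ moves with $s=1$. The Hamilton Cycle case is analogous: Maker claims $\{v,u_1\}$ and $\{v,u_2\}$ in his first two moves, after which Breaker's remaining plays at $v$ are irrelevant, and Maker need only build a Hamilton $u_1$--$u_2$ path in $K_{n-1}$ with essentially no opposition. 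So the ``structural constraints'' you allude to do not materialise: once Maker has committed at $v$, Breaker's strategy wastes every subsequent move.

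The paper's proof uses a genuinely different idea. Breaker does not attack a single vertex but instead builds and maintains a \emph{clique} $C$ in his own graph on roughly $b/2$ untouched vertices (growing it to order $b$ in the Hamilton case, where a vertex is only evicted once $d_M$ reaches $2$). The point of the clique is that no two vertices of $C$ can be matched to each other (or be consecutive on a Hamilton cycle), so every vertex of $C$ must eventually be connected by Maker to something outside $C$; each such connection lands on a vertex that already has positive Maker-degree, and the degree-sum count then yields the extra $b/4$ (resp.\ $b/2$) moves. The clique structure is what makes the excess degree unavoidable --- it spreads the obstruction over many vertices simultaneously, which a single-vertex attack cannot do.
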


To sum up, if the number of moves Maker needs to play in order to win in the Perfect Matching game is denoted by $p(b)$, on the whole range of biases between $1$ and $(1-o(1))n/\ln n$ we have that $\frac b4 \leq p(b) - \frac n2 \leq O(b \ln b) $, as given by Theorem~\ref{thm:perfM}, Theorem~\ref{thm:Krivi} and Theorem~\ref{thm:delay}~(i).

In the Hamilton Cycle game, if we denote the number of moves Maker needs to play in order to win by $h(b)$, then Theorem~\ref{thm:HamVerySmall}, Theorem~\ref{thm:HamSmall}, Theorem~\ref{thm:Krivi} and Theorem~\ref{thm:delay}~(ii) provide non-trivial upper and lower bounds for the whole range of biases between $1$ and $(1-o(1))n/\ln n$. If we look at the value $h(b) - n$ and express both the upper and lower bounds as functions of $b$, the lower bound on the whole range is $\frac b2$, while the upper bound varies between $b^{1+\varepsilon}$ and $b^{7+\varepsilon}$, for any $\varepsilon>0$. In particular, for $b$ a constant, both upper and lower bounds are a constant.

Finding the right order of magnitude of both $p(b) - \frac n2$ and $h(b) - n$ remains an open problem, and we are particularly curious if they are linear in $b$.

The rest of the paper is organized as follows. After we list the notation we use, in Section~\ref{sec::Aux} we collect some preliminaries. Then, in Section~\ref{sec:makerPM} we prove Theorem~\ref{thm:perfM}, in Section~\ref{sec:hamVS} we prove Theorem~\ref{thm:HamVerySmall}, in Section~\ref{sec:hamS} we prove Theorem~\ref{thm:HamSmall} and in Section~\ref{sec:breakfast} we prove Theorem~\ref{thm:delay}.

\subsection{Notation} Our graph-theoretic notation is standard and
follows that of~\cite{West}. In particular, we use the following.

For a graph $G$, let $V(G)$ and $E(G)$ denote its sets of vertices
and edges respectively, and let $v(G) = |V(G)|$ and $e(G) = |E(G)|$.
For a set $S \subseteq V(G)$,
let $G[S]$ denote the subgraph of $G$ which is induced on the set
$S$. For disjoint sets $S,T \subseteq V(G)$, let $N_G(S,T) = \{u \in T : \exists v \in S, \{u,v\} \in E(G)\}$ denote the set of neighbors of the vertices of $S$ in $T$. For a set $T \subseteq V(G)$ and a vertex $w \in V(G) \setminus T$ we abbreviate $N_G(\{w\}, T)$ to
$N_G(w, T)$, and let $d_G(w,T) = |N_G(w,T)|$ denote the degree of
$w$ into $T$. For a set $S \subseteq V(G)$ and a vertex $w \in V(G)$
we abbreviate $N_G(S, V(G) \setminus S)$ to $N_G(S)$ and $N_G(w,
V(G) \setminus \{w\})$ to $N_G(w)$. 
We let $d_G(w) = |N_G(w)|$ denote the degree of $w$ in $G$. The minimum and maximum degrees of a graph $G$ are denoted by $\delta(G)$ and
$\Delta(G)$ respectively. Often, when there is no risk of confusion,
we omit the subscript $G$ from the notation above. Given a path $P$, let $v_P^1$ and $v_P^2$ denote its endpoints (in arbitrary order).
A Hamilton path in a given graph $G$ is a path containing all the vertices of $G$. A graph $G$ is called \emph{Hamilton-connected} if for every $p,q \in V(G)$ there exists a Hamilton path in $G$ between $p$ and $q$.

Assume that some Maker--Breaker game, played on the edge set of some
graph $G$, is in progress. At any given moment during this game, we
denote the graph spanned by Maker's edges by $M$ and the graph
spanned by Breaker's edges by $B$; the edges of $G \setminus (M \cup
B)$ are called \emph{free}.

In the rest of our paper $\ln$ stands for the natural logarithm. For the clarity of presentation we omit floor and ceiling signs whenever they are not crucial.

\section{Preliminaries}
\label{sec::Aux}

We need the results about the so-called \emph{Box game}, introduced by~\cite{CE}. The game is played on $k$ disjoint winning sets, whose sizes differ by at most $1$, that contain altogether $t$ elements. BoxMaker claims $a$ elements per move, while BoxBreaker claims $1$ element per move. The game is denoted by $B(k,t,a,1)$. In order to give a criterion for winning in $B(k,t,a,1)$, the following recursive function was defined in~\cite{CE}
\begin{displaymath}
f(k,a):=\left\{
\begin{array}{rl}
0, & k=1\\
\lfloor\frac{k(f(k-1,a)+a)}{k-1}\rfloor, & k\ge 2.
\end{array}\right.
\end{displaymath}

The value of $f(k,a)$ can be approximated as
\begin{equation}
\label{eq:approx}
(a-1)k\sum_{i=1}^k\frac{1}{i}\le f(k,a) \le ak\sum_{i=1}^k\frac{1}{i}.
\end{equation}

\begin{theorem}[\cite{CE}, \emph{the Box game criterion}]
\label{thm:box}
Let $a$, $k$ and $t$ be positive integers. BoxMaker has a winning strategy in $B(k,t,a,1)$ if and only if $t\le f(k,a)$.
\end{theorem}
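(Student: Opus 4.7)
The plan is to prove both directions of the equivalence by induction on $k$, relying on one structural observation: since the winning sets are pairwise disjoint, any single element BoxBreaker places in a box suffices to prevent BoxMaker from ever completing that box. Consequently BoxBreaker's only meaningful strategic choice in each of his turns is which live box to ``kill''.

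For the sufficiency direction, I would prescribe BoxMaker the strategy of always claiming $a$ free elements in the currently smallest live box. The base case $k=1$ is vacuous as $f(1,a)=0$. For the inductive step, I would split on BoxBreaker's reply. If he declines to kill the attacked box, BoxMaker simply persists there and finishes it in at most $\lceil s_1/a\rceil$ further moves of his own; one verifies that BoxBreaker does not have enough turns in this window to kill all of the other $k-1$ live boxes, so BoxMaker wins directly. Otherwise BoxBreaker kills the attacked box, and we reduce to a $(k-1)$-box subgame whose total free count is at most $t-\lfloor t/k\rfloor$, after a harmless rebalancing of the remaining box sizes to satisfy the ``differ by at most one'' hypothesis (such rebalancing can only help BoxMaker). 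The induction hypothesis then yields a BoxMaker win, provided $t-\lfloor t/k\rfloor\le f(k-1,a)$.

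This last inequality is the main technical obstacle, since a direct check shows $t-\lfloor t/k\rfloor$ can exceed $f(k-1,a)$ by as much as $a$; the recursion $f(k,a)=\lfloor k(f(k-1,a)+a)/(k-1)\rfloor$ is calibrated with the extra $+a$ precisely to absorb this gap. The cleanest way I know to close it is to strengthen the inductive invariant so that it applies to arbitrary partially-played states, charging BoxMaker for the $a$ elements he has already accumulated in the attacked box as a ``head start'' in the reduced game; equivalently, one carries along a potential function that decreases by at least the size of one box per round of optimal play on both sides. For the converse direction, I would mirror the strategy: BoxBreaker always kills the box in which BoxMaker just played (or the smallest live box if no attack is in progress), and the same reduction run in reverse preserves the strict inequality $t'>f(k-1,a)$ at every stage, eventually forcing BoxMaker into a single-box endgame he cannot finish. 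Finally, the approximation in~\eqref{eq:approx} can be recovered from the recursion by comparison with partial sums of the harmonic series.
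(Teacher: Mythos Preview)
The paper does not prove this theorem: Theorem~\ref{thm:box} is quoted from Chv\'atal and Erd\H{o}s~\cite{CE} in the preliminaries, with no argument supplied beyond the statement and the harmonic-sum estimate~\eqref{eq:approx}. There is therefore nothing in the paper to compare your proposal against.

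As a standalone sketch your outline follows the classical argument from~\cite{CE}: induct on $k$, have BoxMaker concentrate on a smallest live box, and reduce to $k-1$ boxes after BoxBreaker's reply, invoking the recursion that defines $f$. You yourself flag the genuine difficulty, namely that the naive inequality $t-\lfloor t/k\rfloor\le f(k-1,a)$ need not hold and must be repaired by carrying BoxMaker's $a$ already-claimed elements into the reduced game (equivalently, by strengthening the inductive hypothesis to partially played positions). That is exactly how the original argument closes the gap, so the plan is sound, though as written it remains a sketch rather than a proof. Two places deserve more care: the order of moves should be pinned down explicitly, and the base case is not vacuous---with $f(1,a)=0$ and $t\ge 1$ the $k=1$ case asserts that BoxMaker always \emph{loses}, which is the content one actually needs to verify.
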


The following theorem deals with the $(1:b)$ Maker--Breaker game
played on the edge set of a graph $G$. Roughly speaking, it claims that if the minimum degree of $G$ is not too small, Maker can build a spanning subgraph with large
minimum degree fast, while making sure that throughout the game, as
long as a vertex $v$ is not of large degree in Maker's graph, the
proportion between the number of Maker's and Breaker's edges touching $v$ is
``good". The proof is similar (in fact, almost identical) to the
proof of Theorem 1.2 by~\cite{GS}. For completeness, we provide the proof in Appendix A.

\begin{theorem} \label{thm:degreeGame}
For every sufficiently large integer $n$ the following holds.
Suppose that:
\begin{enumerate} [$(i)$]
\item $G$ is a graph with $v(G)=n$, and
\item $b\leq \frac{\delta(G)}{4\ln n}$, and
\item $c$ is an integer such that $c(2b+1)\leq \frac{\delta(G)}{3}$,
\end{enumerate}
then, in the $(1:b)$ Maker--Breaker game played on $E(G)$, Maker has
a strategy to build a graph with minimum degree $c$. Moreover, Maker
can do so within $cn$ moves and in such a way that for every $v\in
V(G)$, as long as $d_M(v)\leq c$, we have $d_B(v)-2b\cdot d_M(v)\leq b(2\ln n+1)$ for each $v\in V(G)$.
\end{theorem}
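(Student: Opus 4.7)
The plan is to adapt the potential-driven greedy construction of Gebauer and Szab\'o~\cite{GS}. Call a vertex $v$ \emph{active} as long as $d_M(v)<c$, and assign to it the \emph{danger} $\danger(v)=d_B(v)-2b\cdot d_M(v)$. Maker's strategy in each round is to locate an active vertex $v^\star$ achieving the maximum value of $\danger$, and to claim any free edge $\{v^\star,u\}$ with $u\in N_G(v^\star)$.

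The main work is to maintain, by induction on rounds, the invariant $\danger(v)\leq b(2\ln n+1)$ for every active $v$. The natural tool is an Erd\H{o}s--Selfridge-style potential, for instance
$$\Phi(t)=\sum_{v\text{ active}}2^{\danger(v)/b},$$
so that $\Phi(t)<2^{2\ln n+1}$ at all times forces each individual summand, and hence each $\danger(v)$, to stay below the target. The per-round analysis studies how $\Phi$ evolves: a single Breaker edge incident to an active vertex $v$ multiplies the corresponding summand by $2^{1/b}$, while Maker's move at $v^\star$ divides the summand at $v^\star$ by $4$. Because $v^\star$ was chosen to maximise $\danger$, this decrease removes the largest piece of $\Phi$, which (after a careful amortisation over the $b$ Breaker moves and the single Maker move, using convexity of $x\mapsto 2^{x/b}$ together with $\sum_v k_v=2b$, where $k_v$ counts new Breaker incidences at $v$) suffices to keep the potential under control.

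Once the invariant is in place, feasibility of Maker's strategy is immediate: the number of $G$-neighbours of $v^\star$ blocked by some edge of $M\cup B$ is at most
$$d_M(v^\star)+d_B(v^\star)\leq (c-1)+\bigl(2b(c-1)+b(2\ln n+1)\bigr)\leq c(2b+1)+2b\ln n,$$
which, by assumptions $(ii)$ and $(iii)$, is bounded by $\delta(G)/3+\delta(G)/2<\delta(G)$. So $v^\star$ always has an available free neighbour in $G$. The move bound also drops out for free: each Maker move contributes $2$ to $\sum_v d_M(v)$, and the procedure terminates once every vertex has reached $d_M=c$, so at most $cn$ rounds are used.

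I expect the main obstacle to be calibrating the amortisation that controls $\Phi$. A crude round-by-round bound on $2^{k_v/b}-1$ allows $\Phi$ to grow by a constant factor each round and thereby destroys the target $2^{2\ln n+1}$ long before $cn$ rounds have elapsed. The standard Gebauer--Szab\'o fix is to exploit that Breaker-incidences at low-$\danger$ vertices contribute negligibly (because $2^{x/b}$ is small there), so Breaker's genuine gain is concentrated near the current maximum summand -- which is exactly what Maker's greedy move cuts by a constant factor. Carrying out this accounting cleanly is the one technical step where the constants need to be watched, but with it the minimum-degree construction, the time bound, and the per-vertex danger control all follow at once.
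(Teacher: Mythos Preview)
Your strategy $S_M$ (greedily touch a maximum-danger active vertex) matches the paper exactly, and your feasibility and move-count observations are fine. The divergence is in how the invariant $\danger(v)\leq b(2\ln n+1)$ is established.

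You propose to control the exponential weight $\Phi=\sum_{v\text{ active}}2^{\danger(v)/b}$, and you attribute the required amortisation to a ``standard Gebauer--Szab\'o fix''. This is a misreading of~\cite{GS}: their argument, and the paper's, uses no exponential potential at all. Instead one argues by contradiction. Suppose Maker, following $S_M$, fails at round $s$, meaning some active $v_s$ has $\danger_{B_s}(v_s)>b(2\ln n+1)$. One then traces Maker's choices \emph{backward}: for $0\leq i\leq s-1$ set $A_i=\{v_{s-i},v_{s-i+1},\dots,v_s\}$ and study the \emph{average} danger $\avdanan(A_i)$. The key observations (Lemmas~3.3--3.4 and Corollary~3.5 of~\cite{GS}) are that a full round can decrease $\avdanan$ by at most $2b/|A_i|$, and only when a genuinely new vertex enters $A_i$ (because the entering vertex $v_{s-i}$ was chosen as the current maximum, hence lies at or above the running average). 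Summing $2b/|A_i|$ over the at most $n$ distinct entries gives a drop of at most $2b\ln n$, so $\avdanan_{B_1}(A_{s-1})\geq \danger_{B_s}(v_s)-2b\ln n>0$, contradicting the fact that all dangers are zero before the first move.

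Your exponential-potential route is not obviously salvageable as stated. The obstacle you identify is real: Breaker can scatter incidences across many low-danger vertices, and the aggregate contribution $\sum_v 2^{\danger(v)/b}(2^{k_v/b}-1)$ is then \emph{not} concentrated near the maximum summand (each term is roughly $\ln 2/b$, and there are up to $2b$ of them, giving a constant total increase that Maker's single factor-$4$ cut at one vertex does not dominate in general). So the ``low-danger contributions are negligible'' heuristic is false for this potential, and you have not supplied an alternative amortisation. The backward average-danger argument sidesteps this entirely by working with a linear, not exponential, statistic over a carefully chosen shrinking set.
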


The following theorem gives a sufficient condition for making a Hamilton-connected subgraph, which is in the basis of Maker's strategy in the fast Hamilton Cycle game in the proof of Theorem~\ref{thm:HamSmall}.
\begin{theorem}[\cite{FHK}, Proposition 2.9]
\label{thm:FHK}
Let $n$ be a sufficiently large integer and let $b\le \frac{n}{\ln^2 n}$. If $G$ is a graph with $n$ vertices whose minimum degree $\delta$ is at least $n-g(n)$, where $g(n)=o(n/\ln n)$, then Maker can build a Hamilton-connected graph playing $(1:b)$ game on $E(G)$ in \break $O(n\ln^2 n)$ moves.
\end{theorem}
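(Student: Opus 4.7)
The plan is to apply Theorem~\ref{thm:degreeGame} as a black box to have Maker build a spanning subgraph $M\subseteq G$ of moderate minimum degree, and then argue that such an $M$ is automatically Hamilton-connected because $G$ is so close to complete. Choose $c=c_0\ln^2 n$ for a small constant $c_0>0$. Since $\delta(G)\ge n-o(n/\ln n)\ge n/2$ for large $n$, one easily checks $b\le \delta(G)/(4\ln n)$ and $c(2b+1)\le 2c_0 n+o(n)\le \delta(G)/3$, so Theorem~\ref{thm:degreeGame} applies: Maker builds $M$ with $\delta(M)\ge c$ in $cn=O(n\ln^2 n)$ moves, and, throughout the game, $d_B(v)-2b\cdot d_M(v)\le b(2\ln n+1)$ whenever $d_M(v)\le c$.

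With $M$ in hand I would verify the two standard P\'osa-type conditions for Hamilton-connectedness: (a) $|N_M(S)|\ge 2|S|+1$ for every $S\subseteq V$ with $|S|\le (n-1)/3$, and (b) $M$ contains an edge between every pair of disjoint vertex sets of size at least $n/3$. For very small $|S|\le c/3$, condition~(a) is immediate from $\delta(M)\ge c$: otherwise $S\cup N_M(S)$ would have size less than $c$ and could not absorb all Maker-neighbors of any vertex in $S$. For $|S|$ close to $n/3$, conditions~(a) and~(b) follow from the sparsity of $E(K_n)\setminus E(G)$ (at most $ng(n)/2=o(n^2/\ln n)$ non-$G$ edges) together with the end-of-phase bound $d_B(v)=O(bc)+O(b\ln n)=O(n)$, via a direct global edge-count comparing the missing-from-$M$ edges at $S$ to the total $|S|(n-|S|)$ edges in the cut. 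Once (a) and (b) are established, the classical rotation-extension argument of P\'osa yields a Hamilton path in $M$ between any two prescribed vertices, so $M$ is Hamilton-connected.

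The main obstacle lies in the intermediate regime of condition~(a), with $c\ll |S|\ll n/\ln n$. Here the minimum-degree argument is too weak, while Breaker's total edge count can be as large as $bcn=\Theta(n^2)$, too many for a naive global count to work. The way around is to use the \emph{local} degree bound $d_B(v)\le 2b\cdot d_M(v)+O(b\ln n)$ maintained throughout the play --- not only at the end --- to show that Breaker cannot concentrate many edges inside any small vertex set: for any putative bad $S$, the Breaker edges inside $S\cup N_M(S)$ sum vertex-by-vertex to at most $O(bc|S|)=o(|S|n)$, which leaves more Maker edges from $S$ than $S\cup N_M(S)$ can accommodate. A union bound over candidate witnesses $S$ of intermediate size then rules out the bad event, and this is where the proportionality guarantee of Theorem~\ref{thm:degreeGame}, rather than just a uniform minimum-degree bound, becomes essential and fixes the quantitative constraints $b\le n/\ln^2 n$ and the $O(n\ln^2 n)$ move count in the statement.
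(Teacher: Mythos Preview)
The paper does not prove Theorem~\ref{thm:FHK}; it is quoted verbatim from \cite{FHK} (Proposition~2.9 there) and used as a black box in the proof of Theorem~\ref{thm:HamSmall}. So there is no ``paper's own proof'' to compare against. I will therefore comment on your attempt on its own merits.

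Your approach has a real gap in the intermediate range $c\ll |S|\ll n$, and it is not merely a matter of missing details. First, the arithmetic does not work: with $b$ at (or near) its maximal allowed value $n/\ln^2 n$ and $c=c_0\ln^2 n$, one has $bc=c_0 n=\Theta(n)$, so your ``$O(bc|S|)=o(|S|n)$'' is simply false; the vertex-by-vertex degree bound $d_B(v)\le 2bc+O(b\ln n)=\Theta(n)$ is far too weak to prevent Breaker from blocking the boundary of a single set $S$ of intermediate size. Second, and more fundamentally, you invoke a ``union bound over candidate witnesses $S$'', but Theorem~\ref{thm:degreeGame} gives a \emph{deterministic} strategy (Maker touches the most dangerous vertex and claims an \emph{arbitrary} free edge there); there is no probability space over which to take a union bound. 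Treated as a black box, Theorem~\ref{thm:degreeGame} guarantees only $\delta(M)\ge c=\Theta(\ln^2 n)$ plus the proportionality condition, and neither of these forces $|N_M(S)|\ge 2|S|$ once $|S|\gg\ln^2 n$. A minimum-degree-$\Theta(\ln^2 n)$ graph can be very far from Hamilton-connected.

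What is actually needed (and what \cite{FHK} does, in the spirit of the argument reproduced here in the appendix for Theorem~\ref{thm:HamNotFast}) is a multi-stage strategy: Maker plays the degree game choosing the free edge \emph{uniformly at random}, so that a genuine union bound over sets $S$ shows the resulting graph is an expander with positive probability; subsequent stages then make the expander connected and add boosters via P\'osa rotations until the graph becomes Hamilton-connected. The randomness is essential for the expansion step, and the extra booster stage is essential because expansion by itself does not produce Hamilton paths between prescribed endpoints. A one-shot application of Theorem~\ref{thm:degreeGame} supplies neither ingredient.
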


In the proof of Theorem~\ref{thm:HamSmall} we use Hajnal-Szemer\'edi Theorem.

\begin{theorem}[\cite{HaSze}, Hajnal-Szemer\'edi Theorem]
\label{thm:HaSze}
If $G$ is a graph with $n$ vertices and maximum degree $\Delta$, then $G$ can be colored with $\Delta+1$ colors and moreover, each color class is of size either $\left\lfloor \frac{n}{\Delta+1} \right\rfloor$ or $\left\lceil \frac{n}{\Delta+1} \right\rceil$.
\end{theorem}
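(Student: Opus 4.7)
I would prove the statement by induction on $n$, via iterative rebalancing of an arbitrary proper $(\Delta+1)$-coloring. First, by padding $G$ with at most $\Delta$ isolated vertices I may assume $(\Delta+1)\mid n$ and aim for each color class to have size exactly $k=n/(\Delta+1)$; an equitable coloring of the augmented graph restricts to one of $G$ since isolated vertices impose no constraint. A proper $(\Delta+1)$-coloring itself always exists by the greedy algorithm, so the only remaining task is to rebalance class sizes while preserving properness.

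The core of the argument is an augmenting-path procedure on the color classes. Given a proper coloring with a class $U$ of size $k+1$ and a class $W$ of size $k-1$, I first look for some $v\in U$ with no neighbor in $W$; such a vertex can be moved directly from $U$ to $W$, reducing the imbalance by one. If no such $v$ exists, build iteratively the set $\mathcal{R}$ of classes \emph{reachable} from $U$: start with $U\in\mathcal{R}$, and add a class $V_j$ whenever some $v\in V_i$ with $V_i\in\mathcal{R}$ has no neighbor in $V_j$. If $W\in\mathcal{R}$, cascading the swaps along a chain witnessing this produces a proper coloring with one more balanced pair of classes, and repeating at most $O(n)$ such steps yields the desired equitable coloring.

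The hard part, and the actual content of the Hajnal--Szemer\'edi theorem, is the case $W\notin\mathcal{R}$. Here one has to derive a contradiction with the bound $\Delta(G)\le \Delta$ through a delicate combinatorial analysis: one constructs an auxiliary digraph encoding possible single-vertex swaps, isolates a ``terminal'' vertex whose neighbors must be distributed across all unreachable classes, and then counts edges to force some vertex to have more than $\Delta$ neighbors. This is precisely the step that makes the theorem non-trivial, first resolved in the original proof of Hajnal and Szemer\'edi \cite{HaSze} and later simplified by the discharging-style argument of Kierstead and Kostochka. Since the theorem is used here only as a black-box tool in the proof of \tref{HamSmall}, I would simply invoke the published proof in \cite{HaSze} rather than reproduce the argument.
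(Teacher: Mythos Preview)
Your proposal is consistent with the paper's treatment: the paper does not prove \tref{HaSze} at all but simply states it with a citation to \cite{HaSze} and uses it as a black box in the proof of \tref{HamSmall}. Your final sentence, invoking the published proof rather than reproducing it, is exactly what the paper does; the preceding sketch of the augmenting-path rebalancing argument is extra content beyond what the paper provides, but it is an accurate outline of the known proof strategy and does no harm.
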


We also need the following adaptation of Theorem~\ref{thm:Krivi}. As its proof is closely following the lines of the proof of~\cite[Theorem 1]{K}, we give it in Appendix B.

\begin{theorem} [the Hamilton Cycle game] \label{thm:HamNotFast}
For every $\varepsilon>0$ there exists $\delta>0$ and an integer
$n_0:=n_0(\delta,\varepsilon)$ such that the following holds.
Suppose that:

\begin{enumerate}[$(i)$]
\item $H$ is a graph with $v(H)=n\geq n_0$, and
\item $\Delta(H)\leq \delta n$, and
\item $e(H)\leq \frac{n^2}{\ln n},$
\end{enumerate}
then for every $b\leq \left(1-\varepsilon\right)\frac{n}{\ln n}$, in
the $(1:b)$ Maker--Breaker game played on $E(K_n\setminus H)$, Maker
can build a Hamilton cycle in $O(n)$ moves.
\end{theorem}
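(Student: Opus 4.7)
The plan is to follow the proof of \tref{Krivi} from~\cite{K} essentially verbatim, running Maker's strategy on $E(K_n \setminus H)$ instead of $E(K_n)$, and verifying that the two hypotheses on $H$---namely $\Delta(H) \le \delta n$ and $e(H) \le n^2/\ln n$---do not disrupt any of its steps. Writing $G := K_n \setminus H$, the first hypothesis gives $\delta(G) \ge (1-\delta)n - 1$, so the base graph still has minimum degree essentially $n$, while the second hypothesis says that the forbidden edge set has density $o(1)$, so almost all pairs of vertices are still available to Maker.

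Krivelevich's strategy has two stages. In Stage~I, Maker builds a subgraph $M_1$ on $V(G)$ of suitably large minimum degree while keeping each vertex's Breaker-degree under control; this is essentially the content of \tref{degreeGame}, whose hypotheses we verify for $G$ using $\delta(G) \ge (1-\delta)n - 1$, the upper bound on $b$, and an appropriate choice of~$c$. Stage~I is designed to finish in $O(n)$ moves. In Stage~II, Maker extends $M_1$ to a Hamilton cycle via the classical \emph{booster} technique: if Maker's current graph is a good expander and not yet Hamiltonian, then $\Omega(n^2)$ pairs of vertices are boosters (each one strictly improves the Hamiltonicity status when added to Maker's graph). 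Since Maker can only claim edges of $G$ not yet taken by Breaker, the number of boosters actually available to Maker is at least
$$\Omega(n^2) - e(H) - e(B) \ge \Omega(n^2) - O(n^2/\ln n) - O(nb) = \Omega(n^2),$$
which is strictly positive. Hence a free booster exists in every move of Stage~II, and adding one per move delivers a Hamilton cycle after another $O(n)$ moves.

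The main obstacle is to verify that the expansion property built up in Stage~I and exploited throughout Stage~II is not destroyed by the absence of the edges of $H$. The key observation is that deleting a graph of maximum degree at most $\delta n$ from $K_n$ shrinks the neighborhood of any vertex set by at most $\delta n$ vertices, so by choosing $\delta$ sufficiently small (depending on~$\varepsilon$) the expansion constants appearing in Krivelevich's analysis are only marginally perturbed and can be restored by a mild rescaling of the absolute constants involved. Once this is checked, the rest of the proof of~\cite[Theorem~1]{K} applies with no further modifications and produces the claimed $O(n)$-move Hamilton cycle for Maker on $E(K_n \setminus H)$.
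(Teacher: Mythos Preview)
Your approach is the same as the paper's---follow Krivelevich's proof and check that the two conditions on $H$ do not break any step---but there are two places where your sketch is inaccurate enough to count as gaps.

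First, the booster count. You write that a good expander which is not Hamiltonian has $\Omega(n^2)$ boosters, and then subtract $e(H)\le n^2/\ln n$ from this. That is not what P\'osa's lemma gives: for a connected $(k_0,2)$-expander the guaranteed number of boosters is only $k_0^2/2$, and in Krivelevich's argument $k_0=n/\ln^{0.49}n$, so the booster count is $n^2/(2\ln^{0.98}n)$, not $\Omega(n^2)$. The hypothesis $e(H)\le n^2/\ln n$ is tailored precisely so that $n^2/\ln n=o(n^2/\ln^{0.98}n)$, which is the comparison you actually need. Your displayed inequality happens to reach the right conclusion, but the premise is wrong and the real margin is much thinner; the paper carries out exactly this finer count.

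Second, you identify Stage~I with \tref{degreeGame}, but that theorem only delivers minimum degree~$c$ with controlled Breaker-degrees; it says nothing about expansion. In Krivelevich's proof the expansion of $M_1$ does not come from large minimum degree---it comes from the fact that Maker chooses each edge \emph{randomly} among the free edges at the selected vertex, and a first-moment calculation then shows that $M_1$ is a.s.\ a $k_0$-expander. The role of the degree-game argument is only to guarantee that at each such random choice there are still at least $\delta n - O(1)$ free edges available (this is where $\Delta(H)\le\delta n$ enters, as the paper makes explicit). You also collapse three stages into two: between building the expander and claiming boosters one must first make Maker's graph connected, which the paper handles as a separate short stage.
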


Finally, the following lemma is used to prove Theorem~\ref{thm:HamVerySmall}.

\begin{lemma}
\label{avgDegLem}
Let $G$ be a graph on $n$ vertices whose average degree is $D < n-1$. Then, there exist two nonadjacent vertices $\{x,y\}\in V(G)$ s.t.\ $d(x)+d(y)\geq D$.
\end{lemma}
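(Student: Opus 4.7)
The plan is to focus on a vertex $y$ of minimum degree and then locate a suitable nonadjacent partner $x$ by a simple averaging argument. Let $\delta := \delta(G)$ and let $y$ be any vertex with $d(y) = \delta$. Since the minimum degree is at most the average degree, we have $\delta \leq D$, and the hypothesis $D < n-1$ forces $\delta < n-1$, so $y$ has at least one non-neighbor. Set $S := V(G) \setminus (\{y\} \cup N(y))$, the set of non-neighbors of $y$ other than $y$ itself; then $|S| = n-1-\delta \geq 1$.

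The key idea is to bound from below the average degree over $S$. Since $\sum_{v \in V(G)} d(v) = Dn$ and we can use the trivial bound $d(z) \leq n-1$ for each of the $\delta$ neighbors of $y$, we get
$$\sum_{x \in S} d(x) \;=\; Dn - d(y) - \sum_{z \in N(y)} d(z) \;\geq\; Dn - \delta - \delta(n-1) \;=\; n(D-\delta).$$
Dividing by $|S| = n-1-\delta$, the average of $d(x)$ over $x \in S$ is at least $\tfrac{n(D-\delta)}{n-1-\delta}$, which in turn is at least $D-\delta$ because $D - \delta \geq 0$ and $n \geq n-1-\delta$. Therefore some $x \in S$ satisfies $d(x) \geq D-\delta$, and the nonadjacent pair $\{x,y\}$ then fulfills $d(x) + d(y) \geq (D-\delta) + \delta = D$, as required.

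I do not anticipate a serious obstacle: the only delicate points are checking that $y$ actually has a non-neighbor (immediate from $\delta \leq D < n-1$) and that the numerical inequality $\tfrac{n(D-\delta)}{n-1-\delta} \geq D-\delta$ holds (immediate from $D \geq \delta$ and $n \geq n-1-\delta$). An alternative route, should one prefer to avoid picking the minimum-degree vertex explicitly, would be a short induction on $n$: pick a maximum-degree vertex $x$ (so $d(x) \geq D$); if $x$ has a non-neighbor $y$ then $d(x) + d(y) \geq D$ is trivial, and otherwise $d(x) = n-1$ and one applies the statement inductively to $G - x$, whose average degree $D' = \tfrac{Dn}{n-1} - 2$ still satisfies $D' < (n-1)-1$. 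The averaging proof above, however, is more direct.
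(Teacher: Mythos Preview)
Your argument is correct and noticeably cleaner than the paper's. The paper proceeds by contradiction: assuming $d(p)+d(q)<D$ for every nonadjacent pair, it sums $d(p)+d(q)$ over all non-edges, rewrites this sum as $(n-1)\sum d(p)-\sum d(p)^2$, and then invokes de Caen's bound $\sum d(p)^2\le m\bigl(\tfrac{2m}{n-1}+n-2\bigr)$ to reach the contradiction $n-1<D$. Your approach avoids any external inequality: by anchoring at a minimum-degree vertex $y$ and averaging over its non-neighbors, you get the conclusion directly from $\sum d(v)=Dn$ and the trivial bound $d(z)\le n-1$ on $N(y)$. The paper's route has the mild conceptual advantage of treating all non-edges symmetrically, but your proof is shorter, fully self-contained, and arguably the ``right'' argument for this lemma. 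Your parenthetical alternative via a maximum-degree vertex is also valid, though the induction to handle the $d(x)=n-1$ case makes it less slick than your main proof.
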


\begin{proof}
Let $m=e(G)$. We know that $m=\frac{Dn}{2}$. As $D<n-1$, a pair of non-adjacent vertices exists.  Suppose for a contradiction that for all pairs of nonadjacent vertices $\{p,q\}\in \binom{V(G)}{2} \setminus E(G)$ we have $d(p)+d(q)<D$. Then
\begin{equation}
\label{neqDeg}
\sum_{\{p,q\}\in \binom{V(G)}{2} \setminus E(G)} d(p)+d(q)<\left({n \choose 2}-m\right) D.
\end{equation}
The left side of the inequality (\ref{neqDeg}) can be written as
\begin{align}
\sum_{\{p,q\}\in \binom{V(G)}{2} \setminus E(G)} d(p)+d(q) &= \sum_{p\in V(G)} d(p)\cdot (n-1-d(p)) \nonumber \\
 &=(n-1)\sum_{p \in V(G)} d(p) - \sum_{p \in V(G)} d(p)^2. \label{csch}
\end{align}
Combining (\ref{csch}) and (\ref{neqDeg}), applying the bound $\sum_{p \in V(G)}d(p)^2 \leq m\left(\frac{2m}{n-1}+n-2\right)$ given by~\cite{DC98}, as well as the fact that $\sum_{p\in V(G)} d(p) = 2m$,
we get
\begin{displaymath}
(n-1)2m - m\left(\frac{2m}{n-1}+n-2\right) < \left({n \choose 2}-m\right) D.
\end{displaymath}
After expansion and rearrangement, using $2m = Dn$, we get that the above inequality is equivalent to $ n-1 < D$,
which is obviously in contradiction with $D<n-1$.
\end{proof}

\section{Proof of Theorem \ref{thm:perfM}}
\label{sec:makerPM}


For the proof of Theorem~\ref{thm:perfM}, we impose several restrictions on $b$. Eventually, our result in Theorem~\ref{thm:perfM} holds for $b$ as large as the threshold bias $n/\ln n$ multiplied with a small constant. As we cannot get this constant close to $1$, we make no particular effort to calculate its value or optimize it.

Let $0<\delta<1/2$ be a small positive constant and let $n_0:=n_0(\delta)$ be a positive integer as obtained by Theorem
\ref{thm:HamNotFast}, applied with $\varepsilon=99/100$. Let $b\leq \frac{\delta
n}{100\ln n}$ and let $n_1$ be such that $b=\frac{n_1}{100\ln n_1}$.
Set $m:=\frac{n_0+n_1}{\delta}$.

First we describe a strategy for Maker, that we denote by $S_M$, and then
prove it is a winning strategy. At any point throughout the game, if
Maker is unable to follow the proposed strategy $S_M$, then she forfeits the game.

Maker's strategy $S_M$ is divided into the following two stages.

\textbf{Stage 1.} In this stage, Maker will build a matching
$M'\subseteq E(K_n)$ of size $\ell:=\frac {n-m}{2}$ in $|M'|$ moves.
For each $1\leq i\leq \ell$, after Maker's $i$th move his graph
consists of a partial matching $M_i\subseteq E(K_n)$ (with
$M'=M_\ell$) and a set of isolated vertices $U_i\subseteq V(K_n)$,
where
$U_i=\{V(K_n)\setminus V(M_i)\}.$ Initially, $M_0:=\emptyset$ and $U_0:=V(K_n)$. Now, for each $1\leq
i \leq \ell$, in her $i$th move Maker will claim an arbitrary free edge
$\{v_i,w_i\}\in E(K_n[U_{i-1}])$ such that:

\begin{enumerate} [(i)]
\item $d_B(v_i,U_{i-1})=
\max\{d_B(v,U_{i-1}): v\in U_{i-1}\}$, and
\item $d_B(w_i,U_{i-1})=
\max\{d_B(w,U_{i-1}):w\in U_{i-1} \text{ and } \{w,v_i\} \text{ is free}\}$.
\end{enumerate}
As soon as stage 1 ends, Maker proceeds to stage 2.

\textbf{Stage 2.} In this stage, Maker claims a Hamilton cycle on
$E\left((K_n\setminus B)[U_\ell]\right)$, in the way provided by Theorem~\ref{thm:HamNotFast}. Moreover, Maker does so in $O(b\ln b)$ moves.

The following lemma guarantees that, for each $1\leq i\leq \ell$, Maker can always make her $i$th move according to the strategy proposed in stage 1 of $S_M$.

For $0\leq i \leq \ell$, let
$\displaystyle S_i=\sum_{v\in U_i} d_B(v,U_i)$ and
$D_i=\frac{S_i}{|U_i|}$ denote the sum and the average of the
degrees of vertices in $B[U_i]$ before Breaker's $(i+1)$st move,
respectively, and let $\displaystyle \Delta_i=\Delta(B[U_i])$.
In order to show that Maker can indeed play as proposed by $S_M$ in stage 1, it is enough to prove that $\Delta_i\leq \delta
|U_i|$ holds for each such $i$. This follows from the following lemma, which is central in proving Theorem~\ref{thm:perfM}.

\begin{lemma}\label{claim1}
If Maker can follow strategy $S_M$, then the following holds for each $1\leq i\leq \ell$:
\begin{enumerate} [$(i)$]
\item $D_i\leq 2b$, and
\item $\Delta_i\leq \delta |U_i|$.
\end{enumerate}
\end{lemma}

\begin{proof}
\begin{enumerate}[$(i)$]
\item
We prove by induction on $i$ that $D_i\leq 2b$ for each $0\leq i\leq \ell$.

For $i=0$ we trivially have that $D_i=0\leq 2b$.
Assume that Maker could follow her strategy for the first $i$ moves, and that $D_i\leq 2b$ holds. We want to show that $D_{i+1}\leq 2b$ holds as well.

Notice that since Breaker's bias is $b$, it follows that for each $i$, in his $(i+1)$st move Breaker can increase $S_i$ by at most $2b$. Moreover, playing according to the proposed strategy for stage 1, by claiming the edge $\{v_{i+1},w_{i+1}\}$, Maker decreases
$S_i$ by
\begin{displaymath}
2d_B(v_{i+1},U_i)+2d_B(w_{i+1},U_i)=2\Delta_i+2d_B(w_{i+1},U_i).
\end{displaymath}
Therefore, we have that
\begin{align}
D_{i+1}& \leq  \frac{D_i|U_i|+2b-2\Delta_i-2d_B(w_{i+1},U_i)}{|U_{i+1}|} \nonumber\\
&=D_i+2\cdot\frac{D_i+b-\Delta_i-d_B(w_{i+1},U_i)}{|U_i|-2}.
\label{eqnUpperBoundAverage}
\end{align}

To prove that $D_{i+1}\leq 2b$ holds, we distinguish between the following two cases.

\emph{Case 1:} $D_i\leq 3b/2$. In this case, using the estimate
\eqref{eqnUpperBoundAverage} and the fact that $|U_i|\geq 12$ we
have that $D_{i+1}\leq 3b/2+\frac{5b}{12-2}=2b$ as desired.

\emph{Case 2:} $D_i>3b/2$. Notice that from
\eqref{eqnUpperBoundAverage} it is enough to show that
\begin{align}
\Delta_i+d_B(w_{i+1},U_i)\geq D_i+b \label{eqnEnough}.
\end{align}
Indeed, if it is true then we obtain that $D_{i+1}\leq D_i$ which by
the induction hypothesis is bounded by $2b$.

If $\Delta_i\geq 3b$, then \eqref{eqnEnough} trivially holds, as
$\Delta_i+d_B(w_{i+1},U_i)\geq \Delta_i\geq 2b+b\geq D_i+b$. Otherwise, we
have that $3b/2<D_i\leq \Delta_i<3b$. Let $x$ be the number of vertices
in $U_i$ with degree at least $b$ in Breaker's graph. Notice that
since $3b/2< D_i\leq \frac{3bx+(|U_i|-x)b}{|U_i|}$, it follows
that $x> \frac{|U_i|}{4}$. Now, since $\frac{|U_i|}{4}>3b$, it
follows that there exists a vertex $w\in U_i$ for which the edge
$\{v_{i+1},w\}$ is free and $d_B(w,U_i)\geq b$. Therefore,
$d_B(w_{i+1},U_i)\geq b$. Finally, combining it with the fact that
$\Delta_i\geq D_i$, we conclude that \eqref{eqnEnough} holds.

\item Notice first that while $\delta|U_i|\geq 2b(1+2\ln n)$, the claim is
true as a consequence of Theorem~\ref{thm:degreeGame}, as the strategies of Maker in both games are the same: to touch the vertex of the largest degree. The conditions of Theorem~\ref{thm:degreeGame} are satisfied as $c=1$ which satisfies condition $(iii)$, and $\delta|U_i|\geq 2b(1+2\ln n)$ implies that $b\leq \frac{\delta |U_i|}{2(1+2\ln n)}\leq \frac{|U_i|(1-\delta)}{4\ln |U_i|}$, which satisfies $(ii)$. Therefore, it is enough to prove the lemma for $i$'s such that
$|U_i|<\frac{2b(1+2\ln n)}{\delta}\leq \frac{n}{10}$.

Now, let us look at the case when $\delta |U_i|< 2b(1+2\ln n)$. Let $s=n-\frac{b(1+2\ln n)}{\delta}$. Assume towards a contradiction
that for some $s\leq i_0\leq \ell$, after Maker's $i_0$th move,
there exists a vertex $v\in U_{i_0}$ for which
$d^*:=d_B(v,U_{i_0})>\delta |U_{i_0}|$. Now, for each $k\geq 1$ we will
recursively construct a set $R_k$ for which the following holds:
\begin{enumerate} [(a)]
\item $R_k\subseteq U_{i_0-k}$,
\item $|R_k|=k+1$, and
\item for each $k\geq 1$, after $(i_0-k)$th round,
\begin{align} \sum_{u\in
R_k}d_B(u,U_{i_0-k})\geq (k+1)\left(d^*-2b\cdot \sum_{j=2}^{k+1}\frac
1j\right). \label{eqnInduction}
\end{align}
\end{enumerate}

For $k=1$, let $R_{1}:=\{v,v_{i_0}\}\subseteq U_{i_0-1}$, where $v$ is
a vertex with $d_B(v,U_{i_0})=d^*$ and $v_{i_0}\in U_{i_0-1}$ is the
vertex that Maker has touched in her $i_0$th move. Since
$d_B(v_{i_0},U_{i_0-1})=d_{i_0}\geq d^*$ and Breaker, claiming $b$ edges per move, could not increase the degrees of these two vertices by more than $2b$ in his $i_0$th move, inequality
\eqref{eqnInduction} trivially holds.

Assume we built $R_{k}$, satisfying $(a),(b)$ and $(c)$, we want to build $R_{k+1}$. Let
$v_{i_0-k}\in U_{i_0-k-1}$ be the vertex that Maker has touched in
her $(i_0-k)$th move. Notice that $v_{i_0-k}\notin R_k$ (otherwise $R_k$
cannot be a subset of $U_{i_0-k}$) and $U_{i_0-k}\subseteq U_{i_0-k-1}$.

Hence, we conclude that before Maker's $(i_0-k)$th move
\begin{align}
d_B(v_{i_0-k},U_{i_0-k-1})&\geq \frac{1}{|R_k|}\sum_{u\in
R_k}d_B(u,U_{i_0-k})
= \frac{1}{k+1}\sum_{u\in R_k}d_B(u,U_{i_0-k}).
\label{eqnInductionStepEstimate}
\end{align}

Define $R_{k+1}:=R_k\cup \{v_{i_0-k}\}$. We have that $R_{k}\subseteq U_{i_0-k}\subseteq U_{i_0-k-1}$ and $v_{i_0-k}\in U_{i_0-k-1}$, which together imply that $R_{k+1}\subseteq U_{i_0-k-1}$, satisfying $(a)$. Also, $|R_{k+1}|=|R_{k}|+1=k+2$ satisfies $(b)$. Combining
\eqref{eqnInduction}, \eqref{eqnInductionStepEstimate} and the fact that Breaker can increase the sum of all degrees in $U_{i_0-k-1}$ by at most $2b$ in one move we obtain that

\begin{align}
\sum_{u\in R_{k+1}}d_B(u,U_{i_0-k-1})&\geq \sum_{u\in
R_k}d_B(u,U_{i_0-k}) + \frac{1}{k+1}\sum_{u\in
R_k}d_B(u,U_{i_0-k})-2b\nonumber \\
&=\frac{k+2}{k+1}\cdot\sum_{u\in
R_k}d_B(u,U_{i_0-k})-2b \nonumber \\
&\geq (k+2)\cdot \left( \frac{1}{k+1}\cdot (k+1)\left(d^*-2b\cdot
\sum_{j=2}^{k+1}\frac 1j\right)-\frac{2b}{k+2}  \right) \nonumber
\\
&= (k+2)\left(d^*-2b\cdot \sum_{j=2}^{k+2}\frac 1j\right), \nonumber
\end{align}
and so the property $(c)$ is also satisfied for $R_{k+1}$. This completes the inductive step.
\allowdisplaybreaks
Now, for $k=|U_{i_0}|-1$ we obtain that
\begin{align}
D_{i_0-k}&=\frac{\displaystyle \sum_{u\in U_{i_0-k}}d_B(u,U_{i_0-k})}{|U_{i_0-k}|} \geq \frac{\displaystyle \sum_{u\in
R_k}d_B(u,U_{i_0-k})}{|U_{i_0-k}|}\nonumber \\
&\geq \frac{(k+1)\left(d^*-2b\cdot \displaystyle \sum_{j=2}^{k+1}\frac
1j\right)}{3k+1} \geq \frac{d^*-2b\ln |U_{i_0}|}{3} \nonumber \\
&\geq \frac{\delta|U_{i_0}|-2b\ln |U_{i_0}|}{3} >2b, \nonumber
\end{align}
which is clearly in contradiction with $(i)$. This completes the proof
of Lemma~\ref{claim1}.
\end{enumerate}
\end{proof}

\begin{proof}

If Maker can follow stages 1 and 2 of the proposed strategy $S_M$ without forfeiting the game then she wins, since a Hamilton cycle on $E\left((K_n\setminus B)[U_\ell]\right)$ obviously
contains a perfect matching. It thus suffices to show that indeed Maker can follow the proposed strategy without forfeiting the game. We consider each stage separately.

Maker can follow stage 1 of the proposed strategy $S_M$, as shown by Lemma~\ref{claim1}.

Moving on to stage 2, let $H=B[U_\ell]$. When stage 1 is over, Lemma~\ref{claim1} gives that $\Delta(H)\leq\delta|U_\ell|$ and $e(H)=\frac{D_\ell |U_\ell|}{2}\leq \frac{|U_\ell|^2}{\ln |U_\ell|}$. This satisfies the conditions of Theorem \ref{thm:HamNotFast}, so Maker can claim a Hamilton cycle on $V(K_n\setminus B)[U_\ell]$ in $O(|U_\ell|)=O(b\ln b)$ moves.

Note that the constant $\alpha$ in the statement of Theorem~\ref{thm:perfM} is obtained by taking $\alpha:=\frac{\delta}{100}$.
\end{proof}

\section{Proof of Theorem~\ref{thm:HamVerySmall}}
\label{sec:hamVS}

\begin{proof}
In this section we give Maker's strategy to win the Hamilton Cycle game when bias $b$ is not too large, namely $b=o\left(\frac{\ln n}{\ln \ln n}\right)$. Throughout the game, Maker will maintain a collection of paths $\cP$ in her graph. Maker's strategy is divided into five stages.

At the beginning of the game, all vertices are isolated, and $\cP$ consists of $n$ paths of length $0$, with $2n$ endpoints altogether (as each of $n$ vertices is seen as both the first and the last vertex of a path). By $End_i$ we denote the multiset of endpoints of $\cP$ after Maker's $i$th move (omitting index $i$ when it is not crucial). During the first two stages, Maker will claim only edges between the endpoints of the paths in $\cP$ (thus connecting two paths into one), so in each of those moves $|\cP|$ will be reduced by one and $|End|$ will be reduced by two.

\textbf{Stage 1.} We fix $\delta :=999/1000$. This stage lasts for $\ell:=n-30b \ln b$ moves, so at the end of the stage we will have $|End_{\ell}|=60 b\ln b$. We will later show that for every vertex $v\in End_{\ell}$ it holds that $d_B(v, End_{\ell})<\delta|End_{\ell}|+b$.

In this stage, Maker plays two games in parallel.

\begin{enumerate}[(1)]
\item In every \emph{odd} move $i$, Maker considers the Breaker's graph induced on $End_{i-1}$, claiming an arbitrary free edge $\{v_i, w_i\}\in E(K_n[End_{i-1}])$, such that:
\begin{enumerate}[(i)]
\item $d_B(v_i, End_{i-1})=\max \left\lbrace d_B(v, End_{i-1}): v\in End_{i-1}\right\rbrace$, and
\item $d_B(w_i, End_{i-1})=\max \left\lbrace d_B(w, End_{i-1}): w\in End \mbox{ and } \{w,v_i\} \mbox{ is free}\right\rbrace$.
\end{enumerate}

\item In every \emph{even} move $i$, Maker considers the \emph{total} Breaker's degree on vertices in $End_{i-1}$, claiming an arbitrary free edge $\{v_i,w_i\} \in E(K_n[End_{i-1}])$, such that:
\begin{enumerate}[(i)]
\item $d_B(v_i)=\max \left\lbrace d_B(v): v\in End_{i-1}\right\rbrace$, and
\item $d_B(w_i, End_{i-1})=\max \left\lbrace d_B(w, End_{i-1}): w\in End_{i-1} \mbox{ and } \{w,v_i\} \mbox{ is free}\right\rbrace$.
\end{enumerate}
\end{enumerate}

\textbf{Stage 2.} In every move of this stage, Maker claims an arbitrary free edge $\{v_i, w_i\}\in E(K_n[End_{i-1}])$, such that
\begin{multline*}
d_B(v_i, End_{i-1})+d_B(w_i,End_{i-1})= \\
= \max \left\lbrace d_B(v, End_{i-1})+d_B(w, End_{i-1}): \{v,w\}\subseteq End_{i-1} \mbox{ and } \{v,w\} \mbox{ is free}\right\rbrace.
\end{multline*}
She plays like this until there are no free edges within $K_n[End]$. We will later prove that this stage lasts for at least $23b\ln b$ moves.

\textbf{Stage 3.} During the course of this stage, Maker makes sure that all paths in $\cP$ are of length greater than $n^{3/4}$. In this stage, the number of paths in $\cP$ remains the same, while some of Maker's edges will be ``forgotten''. Given a path, we say that its \emph{near-middle vertices} are the 1\verb|%| of its vertices that are the closest to its middle (breaking ties arbitrarily).

For each path $P\in \cP$ of length at most $n^{3/4}$, Maker will spot the longest path $Q$ in $\cP$, and claim an edge connecting one of the endpoints of $P$ to a near-middle vertex $x$ of $Q$ which is such that for one of the neighbors $y$ of $x$ on $Q$ we have $d_B(y)<18b\ln n$. (We later show that such an edge will be available for Maker.)  Maker then ``forgets'' about the edge $\{x,y\}$, splitting $Q$ into two parts -- $Q_1$, with an endpoint $x$, and $Q_2$, with an endpoint $y$, now seeing paths $P$ and $Q_1$ joined into one path, while $Q_2$ remains a separate path, see Figure~\ref{fig:stage3}.

\begin{figure}[htb]
\begin{center}
\includegraphics[scale=1]{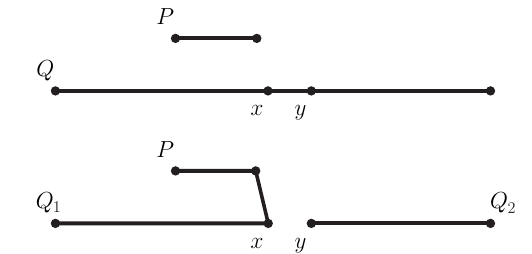}
\end{center}
\caption{Paths $P$ and $Q$ before Maker's move, and the newly obtained pair of paths}
\label{fig:stage3}
\end{figure}

Note that both of the newly obtained paths are longer than $n^{3/4}$, as $b=o\left(\frac{\ln n}{\ln \ln n}\right)$. This stage obviously lasts for less than $7b\ln b$ moves, as there are $7b\ln b$ paths at the  beginning of stage 3.

\textbf{Stage 4.} In this stage Maker will repeatedly select two arbitrary paths from $\cP$, using the method of P\'{o}sa rotations (see~\cite{Posa}) on each of them to eventually be able to connect them into one (longer) path. She will repeat the procedure until only one, Hamilton, path remains in $\cP$. This stage will be subdivided into \emph{phases}, where a new phase begins whenever Maker selects two paths, and ends when Maker connects them into a path. The inner vertices of any path $P \in \cP$ that have at least $n^{1/2}$ Breaker's edges whose other endpoint is in $V(\cP\setminus \{ P \})$ will be called \emph{saturated}.

Let us now describe in more detail the process of connecting two paths into one. We denote the paths by $P_1$ and $P_2$, with endpoints $v_{P_1}^1, v_{P_1}^2$ and $v_{P_2}^1$, $v_{P_2}^2$.
Maker alternately makes P\'{o}sa rotations on $P_1$ and $P_2$ by doing the following.

In her $m$th move of the phase, where $1\leq m \leq 2b+1$, if $m = 2i - 1$ is odd, Maker spots a pair of consecutive vertices $(x_i,x_i')$ on $P_1$ such that $x_i$ is between $v_{P_1}^1$ and $x_i'$ and satisfying conditions that we will describe below, and she claims the edge $\{v_{P_1}^1,x_i'\}$. If $m = 2i$ is even, Maker spots a pair of consecutive vertices $(y_i,y_i')$ on $P_2$ such that $y_i$ is between $v_{P_2}^1$ and $y_i'$ and satisfying conditions that we will describe below, and she claims the edge $\{v_{P_2}^1,y_i'\}$.

When spotting $(x_i,x_i')$ on $P_1$ in odd moves, Maker makes sure that $x_i$ is not saturated, the edges $\{v_{P_1}^1,x_i'\}$ and $\{v_{P_2}^1,x_i\}$ are free, and also, for all $y_j \in V(P_2)$, $1\leq j < i$, the edges $\{x_i, y_j\}$ are free. Note that by claiming the edge $\{v_{P_1}^1,x_i'\}$, Maker has created a new path (a P\'{o}sa rotation of $P_1$) that has an endpoint $x_i$.

When spotting $(y_i,y_i')$ on $P_2$ in even moves, Maker makes sure that $y_i$ is not saturated, the edges $\{v_{P_2}^1,y_i'\}$ and $\{v_{P_1}^1,y_i\}$ are free, and also, for all $x_j \in V(P_1)$, $1\leq j\leq i$, the edges $\{x_j, y_i\}$ are free. Note that by claiming the edge $\{v_{P_2}^1,y_i'\}$ Maker has created a new path (a P\'{o}sa rotation of $P_2$) that has an endpoint $y_i$.

Before each of the odd moves $m = 2i + 1$, Maker checks whether any of the vertices\linebreak[4] $v_{P_1}^1, x_1, \dots, x_i$ can be connected to any of the vertices $v_{P_2}^1, y_1, \dots, y_i$. As soon as this is possible, Maker connects them, thus connecting the two paths (actually, their P\'{o}sa rotations) and finishes the phase.

A phase can last for up to $2b+1$ moves, as in her $m$th move, for $m$ even, Maker creates $m/2+1$ new threats which Breaker needs to claim immediately if she wants to prevent the phase from ending. Thus, knowing that there are $7b\ln b$ paths at the beginning of stage~4, the whole stage lasts for less than $14 b \ln b\cdot (b+1)$ moves.

\textbf{Stage 5.} Maker completes the Hamilton cycle by repeatedly making P\'{o}sa rotations, in a similar way as she did in each phase of stage~4. The only difference is that Maker, in her mind, halves the path into two halves -- left and right, and the P\'{o}sa rotations are alternately performed -- in odd moves from the left endpoint to the left half, and in even moves from the right endpoint to the right half. The whole process then continues in analogous fashion, and this stage lasts for at most $2b + 1$ moves, until the point when Maker closes her paths into Hamilton cycle.

\medskip

If Maker can follow the given five-stage strategy, she obviously wins the game within the required number of moves. We will show that Maker can follow the proposed strategy, and that the move count for each stage matches the desired one. We perform the analysis for each stage separately.

For $0\leq i \leq \ell$, we let $S_i=\sum\limits_{v\in End_i} d_B(v,End_i)$ and $D_i=\frac{S_i}{|End_i|}$ denote the sum and the average of the degrees of vertices in $B[End_i]$ before Breaker's $(i+1)$st move respectively, and let $\Delta_i=\Delta(B[End_i])$.

\medskip

\textbf{Stage 1.} \label{box} We will first look at the game $(2)$ that Maker plays in every even move of hers. This part of the strategy can be analyzed as playing an auxiliary Box game, where Maker takes the role of BoxBreaker -- each $v\in End$ corresponds to one box whose elements are all edges incident to $v$.

With every edge $\{p,q\}$ that Breaker claims, we imagine that BoxMaker claims an element from box $p$ and an element from box $q$ (two elements in total). Note that the same vertex can account for two boxes if it is the double endpoint of a path of length 0, so for each edge claimed by Breaker we have up to four moves that BoxMaker plays in the Box game. As Maker plays this game in every second move, and Breaker claims $2b$ edges in two moves, there are up to $8b$ elements claimed by BoxMaker in each move.

Hence, to show that Maker (alias BoxBreaker) has an upper hand in this game, having in mind that Maker--Breaker games are bias-monotone (see~\cite{HKSSbook}) we will look at the game \break $B(2n, 2n^2, 8b, 1)$.

Now, we want to estimate the size of the largest box that BoxMaker could fill until the end of the game. Note that this gives us the maximum degree in Breaker's graph at every $v\in End$, at any point of stage 1. The size of the largest box is
\begin{align}
\allowdisplaybreaks
s&=\frac{8b}{2n}+\frac{8b}{2n-1}+\cdots +\frac{8b}{1} =8b\sum_{j=1}^{2n}\frac{1}{j} \le 8b\ln (2n) <16 b\ln n. \label{eqBox}
\end{align}
This implies that when stage 1 is over, every vertex $v \in End$ it holds that $d_B(v)<16b\ln n$.

Before we show that Maker can play according to $(1)$, we need the following claim.
\begin{claim}
\label{claim2}
Let $e=\{v,w\}$ be the edge that Maker selects while playing according to $(2)$ in his $(i+1)$st move, where $i$ is an odd integer. It holds that $d_B(v,End_i)+d_B(w,End_i)\geq D_i$.
\end{claim}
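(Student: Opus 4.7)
The plan is a case analysis driven by the value of $d_B(v, End_i)$: if $d_B(v, End_i) \geq D_i$ the claim holds trivially, so I would reduce to the case $d_B(v, End_i) < D_i$, where it suffices to prove $d_B(w, End_i) \geq D_i - d_B(v, End_i)$. The key structural observation is that the non-partners of $v$ in $End_i$ (vertices $u \in End_i \setminus \{v\}$ with $\{v,u\}$ not free) consist of its Breaker-neighbors in $End_i$ together with at most one Maker-neighbor; the latter occurs precisely when $v$ is an endpoint of a length-$1$ path in $\cP$ (otherwise $v$ has no Maker-neighbor inside $End_i$), so the partner set has size at least $|End_i| - 2 - d_B(v, End_i)$.

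In the generic configuration, where $v$'s path in $\cP$ has length different from $1$, the partner set has size at least $|End_i| - 1 - d_B(v, End_i)$, and a short averaging argument over the partners -- using $\sum_{u \in End_i} d_B(u, End_i) = D_i|End_i|$ and the trivial bound $d_B(u, End_i) \leq |End_i|-1$ on the at most $d_B(v, End_i)$ non-partners -- yields $d_B(w, End_i) \geq D_i - d_B(v, End_i)$ by the maximality of $w$ in strategy~(2), closing this case. As a complementary tool I would invoke Lemma~\ref{avgDegLem} applied to $G := B[End_i]$ (which has average degree exactly $D_i$ and is non-complete whenever Maker still has a free edge to play), supplying a benchmark pair $x, y \in End_i$ with $\{x,y\} \notin E(B)$ and $d_B(x, End_i) + d_B(y, End_i) \geq D_i$; whenever at least one of $x, y$ is actually a free partner of $v$, the maximality of $w$ transfers the benchmark bound to Maker's pair directly.

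The main obstacle I anticipate is the length-$1$ subcase: if $v$ is joined in $\cP$ to another endpoint $v' \in End_i$ by a Maker edge, then $v'$ is an additional non-partner and the straight averaging loses an error term that can dominate when $d_B(v, End_i)$ is small, while the benchmark pair from Lemma~\ref{avgDegLem} may happen to coincide with $\{v, v'\}$ and therefore be unavailable to Maker. To close this gap I would use the maximality of total Breaker degree built into strategy~(2): the inequality $d_B(v') \leq d_B(v)$ bounds $d_B(v', End_i)$ from above and tightens the contribution of $v'$ to the non-partner sum, and combining this with the global bound $d_B(v) \leq 16b\ln n$ coming from the Box-game analysis carried out earlier in Stage~1 suffices to restore the averaging and conclude that $d_B(v, End_i) + d_B(w, End_i) \geq D_i$.
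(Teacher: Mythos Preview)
Your route diverges from the paper's. The paper does not split by path length and does not use Lemma~\ref{avgDegLem} here at all; it runs a single counting argument. Writing $x := d_B(v,End_i)$ and $y := |\{u \in End_i : d_B(u,End_i) \geq D_i - x\}|$, the paper bounds $D_i$ from above by $\frac{\Delta_i\, y + (|End_i|-y)(D_i-x)}{|End_i|}$ and extracts $y > \frac{x|End_i|}{\Delta_i - D_i + x} \gg x$. Since only $x$ of these high-degree vertices can be Breaker-neighbours of $v$ in $End_i$, at least one of them is available, and the maximality clause in strategy~(2)(ii) finishes. The whole thing is four lines, with no subcase for a possible Maker-neighbour.

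Your generic-case averaging is correct and is essentially a dual formulation of the paper's count. The gap is in your length-$1$ subcase, where both of your proposed tools fail. First, the appeal to Lemma~\ref{avgDegLem} does not transfer as you claim: if the benchmark pair is $\{p,q\}$ and, say, $p$ is a free partner of $v$, then maximality of $w$ gives only $d_B(w,End_i) \geq d_B(p,End_i)$, and concluding $d_B(v,End_i)+d_B(w,End_i) \geq D_i$ from $d_B(p,End_i)+d_B(q,End_i) \geq D_i$ would require $d_B(v,End_i) \geq d_B(q,End_i)$, which you do not have (recall $v$ is selected by \emph{total} Breaker degree, not by degree inside $End_i$). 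Second, the Box-game fallback does not rescue the averaging either: inserting $d_B(v',End_i) \leq d_B(v) \leq 16b\ln n$ into your partner-average computation reduces the desired inequality $d_B(w,End_i)\geq D_i-x$ to $(D_i - x)(x+2) \geq 16b\ln n$, which is false whenever $D_i - x$ is small (take $x = D_i - 1$, for instance). So the length-$1$ case is not closed by your plan. The paper's counting sidesteps all of this because the surplus $y - x$ already absorbs the one extra forbidden endpoint.
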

\begin{proof}
Since $v$ is chosen so that $d_B(v)$ is maximal we do not right away have that $d_B(v, End_i)>D_i$ holds. If $d_B(v,End_i)\geq D_i$, we have nothing to prove. So, suppose $x:=d_B(v,End_i)<D_i$ and let $y:=|\{w\in End_i: d_B(w,End_i)\ge D_i-x\}|$. It holds that 
\begin{displaymath}
D_i<\frac{\Delta_i \cdot y+(|End_i|-y)\cdot (D_i-x)}{|End_i|}
\end{displaymath} 
and $D_i\le \Delta_i$. By expanding, we get
\begin{equation}
D_i|End_i|<\Delta_i \cdot y+D_i \cdot|End_i|-x|End_i|-y\cdot D_i+xy,
\end{equation}
and we can easily obtain
\begin{equation}
y>\frac{x|End_i|}{\Delta_i-D_i+x}> x.
\end{equation}
Thus, there are more than $x$ vertices in $End_i$ of degree at least $D_i-x$. So, playing according to her strategy in game $(2)$, Maker can find a vertex $w$ that is not adjacent to $v$ in Breaker's graph such that $d_B(w,End_i)\ge D_i-x$ is maximal, and claim the edge $\{v, w\}$, thus making sure that $d_B(v, End_i)+d_B(w, End_i)\geq D_i$. This proves the claim.
\end{proof}

Now, we look at part $(1)$. Maker plays this game in her odd moves, so between the two odd Maker's moves Breaker adds $b'=2b$ edges. 
In order to show that Maker can play according to the given strategy, we need to show the following claim.

\begin{claim}\label{claim3_1}
The following two properties hold for each $i=2m+1$, $m\geq 0$:
\begin{enumerate} [$(i)$]
\item $D_i\leq 2b'$, and
\item $\Delta_i< \delta |End_i|$.
\end{enumerate}
\end{claim}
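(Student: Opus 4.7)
The claim is the direct Hamilton-Cycle analogue of Claim~\ref{claim1} from the Perfect Matching proof, with the twist that Maker plays game~(1) only in odd moves. Since Breaker plays twice between consecutive odd Maker moves, game~(1) effectively faces Breaker bias $b' = 2b$. The plan is to imitate the proof of Claim~\ref{claim1}, with $b$ replaced by $b'$, while exploiting the intervening game~(2) moves via Claim~\ref{claim2}.

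For (i), I induct on $m$ where $i = 2m+1$; the base case is immediate. For the inductive step, I track $S$ and $D$ across the ``double round'' from odd time $i = 2m-1$ to $i+2 = 2m+1$: each of the two Breaker moves raises $S$ by at most $2b$, while Maker's game~(2) move at time $2m$, by Claim~\ref{claim2}, removes two vertices whose summed $B[End]$-degree is at least the current average, so $D$ does not grow (although $|End|$ drops by $2$). Consolidating the four sub-steps, the analogue of the key inequality in the proof of Claim~\ref{claim1}(i) becomes
\begin{equation*}
D_{i+2} \leq D_\ast + \frac{2\bigl(D_\ast - \Delta - d_B(w_{i+2})\bigr)}{|End_i|-4},
\end{equation*}
where $D_\ast,\Delta,d_B(w_{i+2})$ refer to the state just before Maker's game~(1) move at time $i+2$, and $D_\ast \leq D_i + 4b/(|End_i|-2)$. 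I then case-split on $D_\ast$: if $D_\ast \leq 3b$, the inequality directly yields $D_{i+2} \leq D_\ast \leq 3b < 2b'$ (using $\Delta \geq D_\ast$); if $D_\ast > 3b$, I adapt the Case~2 counting of Claim~\ref{claim1}(i) to show that a constant fraction of $End$ has $B[End]$-degree $\geq b' = 2b$, whence the greedy rule of game~(1) guarantees $d_B(w_{i+2}) \geq b'$ and $\Delta + d_B(w_{i+2}) \geq D_\ast + b'$, so $D_{i+2} \leq D_\ast - 2b'/(|End_i|-4) \leq D_i \leq 2b'$.

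For (ii), while $\delta|End_i| \geq 2b(1+\ln 2n)$ the bound follows from Theorem~\ref{thm:degreeGame}, whose hypotheses are met and whose strategy coincides with the max-degree greedy rule of game~(1). For smaller $|End_i|$, I mirror the $R_k$-construction of Claim~\ref{claim1}(ii): assume for contradiction that some odd $i_0$ admits a vertex $v \in End_{i_0}$ with $d^\ast := d_B(v,End_{i_0}) > \delta|End_{i_0}|$, and recursively build sets $R_k \subseteq End_{i_0-2k}$ of size $k+1$ satisfying
\begin{equation*}
\sum_{u \in R_k} d_B(u,End_{i_0-2k}) \geq (k+1)\Bigl(d^\ast - 2b'\sum_{j=2}^{k+1}\tfrac{1}{j}\Bigr),
\end{equation*}
stepping backward by two units in $i$ at a time so that each newly added vertex is one that Maker touched via the max-degree rule of game~(1). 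Taking $k = |End_{i_0}|/2 - 1$, and using $|End_{i_0}| \geq |End_\ell| = 60b\ln b$ to dominate the $2b'\ln|End_{i_0}|$ loss, yields $D_{i_0-2k} > 2b'$, contradicting~(i).

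The main obstacle is the Case~2 counting in (i): the ``many high-degree vertices'' structure must be verified at the state immediately before Maker's game~(1) move (after one game~(2) removal and two Breaker rounds), not at the earlier odd time~$i$. I have to argue that Breaker's additions together with the single game~(2) removal perturb the degree distribution by at most a controlled amount -- concretely, that whenever $D_\ast > 3b$ and $\Delta < 7b$, at least $|End_i|/5$ vertices still have $d_B \geq 2b$, which is enough to produce a free partner for the maximum-degree vertex given $|End_i| \geq 60b\ln b$.
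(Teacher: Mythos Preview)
Your proposal is correct and follows essentially the same approach as the paper's proof: both parts mirror Claim~\ref{claim1} with $b$ replaced by $b'=2b$, invoking Claim~\ref{claim2} to control the intervening game~(2) move in part~(i), and both appeal to Theorem~\ref{thm:degreeGame} for large $|End_i|$ and the backward $R_k$-construction for small $|End_i|$ in part~(ii). Your bookkeeping via the intermediate state $D_\ast$ and your choice of $k=|End_{i_0}|/2-1$ differ cosmetically from the paper (which consolidates everything into a single inequality structurally identical to~\eqref{eqnUpperBoundAverage} and takes $k=|End_{i_0}|-1$, yielding denominator $5k+1$ rather than roughly $6k$), but the argument is the same.
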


\begin{proof}
The proof of this claim is very similar to the proof of Lemma~\ref{claim1}, so we will omit some details.

\begin{enumerate}[$(i)$]
\item
Note that between moves $i$ and $i+2$, Breaker can increase the value of $S_i$ by at most $2b'$, so $S_{i+2}\leq S_i+2b'$. On the other hand, playing on $End$ according to the proposed strategy, after claiming the edge $\{v_{i+2}, w_{i+2}\}$, Maker decreases $S_i$ by
\begin{displaymath}
2d_B(v_{i+2}, End_{i+1})+2d_B(w_{i+2}, End_{i+1}) \geq 2\Delta_i+2d_B(w_{i+2},End_{i+1}).
\end{displaymath}
Also, according to Claim~\ref{claim2}, Maker in her even move $i+1$ claims the edge $\{p,q\}$ such that $d_B(p,End_i)+d_B(q,End_i)\geq D_i$, thus decreasing $S_i$ by at least $2D_i$.\\
Therefore, we have that
\begin{align}
D_{i+2} & \leq \frac{D_i(|End_i|)+2b'-2D_i-2\Delta_i-2d_B(w_{i+2},End_{i+1})}{|End_{i+2}|}\nonumber \\
&\leq \frac{D_i(|End_i|-4+4)+2b'-2D_i-2\Delta_i-2d_B(w_{i+2},End_{i+1})}{|End_{i+2}|} \nonumber \\
&= D_i+2\frac{D_i+b'-\Delta_i-d_B(w_{i+2},End_{i+1})}{|End_i|-4}. \label{hamUpperAvg}
\end{align}
Note that the equality (\ref{hamUpperAvg}) has the same structure as (\ref{eqnUpperBoundAverage}), so we can follow the rest of the proof of Lemma~\ref{claim1} $(i)$
line by line and show by induction on $i$ that for all odd moves after Maker's move it holds that $D_i\leq 2b'$. Here, instead of $U_i$, we use $End_i$ to denote the multiset of endpoints, we have $|End|>14$, and Breaker's bias is $b'$.

\item
Just like in the proof of $(i)$, we also rely on the proof of Lemma~\ref{claim1} $(ii)$, and omit some of the details. While $|End_i|>n/10$, it holds that $\delta|End_i| > 2b'(1+2 \ln 2n)$ and $b'<\frac{(1-\delta)|End_i|}{4\ln|End_i|}$, so the claim is true as a consequence of Theorem~\ref{thm:degreeGame}. Therefore, we need to prove the claim for the values of $i$ such that $|End_i|\leq \frac{n}{10}$. 

Analogously to the proof of Lemma~\ref{claim1}, we let $s=n-\frac{n}{20}$. Towards a contradiction, we assume that for some $s\leq i_0\leq \ell$ and $i_0=2m+1$, $m\geq 0$, after Maker's $i_0$th move there exists a vertex $v \in End_{i_0}$ such that $d^*:=d_B(v,End_{i_0})\geq \delta|End_{i_0}|$. We also inductively construct sets $R_k$, for $k\geq 1$, exactly the same as in the proof of Lemma~\ref{claim1}, with the following properties:
\begin{enumerate} [(a)]
\item $R_k\subseteq End_{i_0-2k}$,
\item $|R_k|=k+1$, and
\item for each $k\geq 1$, after $(i_0-2k)$th round,
\begin{align} \sum_{u\in
R_k}d_B(u,End_{i_0-2k})\geq (k+1)\left(d^*-2b'\cdot \sum_{j=2}^{k+1}\frac
1j\right). \nonumber
\end{align}
\end{enumerate}

To finish the proof, suppose that $k=|End_{i_0}|-1$. We obtain
\begin{align}
D_{i_0-2k}&=\frac{\displaystyle \sum_{u\in End_{i_0-2k}}d_B(u,End_{i_0-2k})}{|End_{i_0-2k}|}\geq \frac{\displaystyle \sum_{u\in
R_k}d_B(u,End_{i_0-2k})}{|End_{i_0-2k}|}\nonumber \\
&\geq \frac{(k+1)\left(d^*-2b'\cdot \displaystyle \sum_{j=2}^{k+1}\frac
1j\right)}{5k+1} \geq \frac{d^*-2b'\ln |End_{i_0}|}{5} \nonumber \\
&\geq \frac{\delta|End_{i_0}|-2b'\ln |End_{i_0}|}{5} >2b', \nonumber
\end{align}

contradicting $(i)$. This completes the proof of Claim~\ref{claim3_1}.
\end{enumerate}
\end{proof}

To complete the proof that Maker can play according to the proposed strategy for stage 1, we can use Claim~\ref{claim3_1} $(ii)$, and observe that if $\ell$ is odd, then for every $v\in End$ we have $d_B(v,End)<\delta|End|<\delta|End|+b$. Otherwise, after Maker's $(l-1)$st move for every $v\in End$ it holds that $d_B(v,End)<\delta|End|$. In the consequent move, Breaker can add at most $b$ edges to some vertex, and so, after Maker's $l$th move played according to strategy in $(2)$, $d_B(v,End)<\delta|End|+b$.

\medskip

\noindent \textbf{Stage 2.} Before the first move, that we denote by $m$, of Breaker in this stage, using Claim~\ref{claim3_1} and knowing that Breaker possibly played one move after the last odd move of Maker in stage~1, we have $D_{m-1}\leq 4b+1$ and $\Delta_{m-1}<\delta|End_{m-1}|+b$.

To show that Maker can play as proposed, let us observe that this stage will certainly be played while $D<|End|-1-\frac{2b}{|End|}$ (as in that case there are free edges among the endpoints of the paths).
In every move $i$ in this stage, Breaker adds at most $b$ edges to $B[End]$, and thus increases $S_{i-1}$ by at most $2b$. On the other hand, Maker playing as proposed and claiming the edge $\{v_i,w_i\}$ decreases $S_{i-1}$ by $d_B(v_i, End_{i-1})+d_B(w_i,End_{i-1})$. Thus, in a way similar to $(\ref{hamUpperAvg})$ we obtain that
\begin{equation}
D_i\leq D_{i-1}+2\frac{D_{i-1}+b-d_B(v_i, End_{i-1})-d_B(w_i,End_{i-1})}{|End_{i-1}|-2}.
\end{equation}
If $d_B(v_i, End_{i-1})+d_B(w_i,End_{i-1}) \geq D_{i-1}+b$, then average Breaker's degree does not increase. Otherwise, as a consequence of Lemma~\ref{avgDegLem}, where $G=B[End]$, we get that $d_B(v_i, End_{i-1})+ \break d_B(w_i,End_{i-1}) \geq D_{i-1}$, as long as there are still free edges among vertices in $End$. So, the average Breaker's degree can increase by at most $\frac{2b}{|End|-2}$.

Let $y:=|End|=60b\ln b$ at the beginning of stage 2, and $t$ be the number of moves played in this stage. During $t$ moves the average degree increases by at most
\begin{equation}
\label{stg2DegInc}
\sum_{j=1}^{t}\frac{2b}{y-2j}= 2b\sum_{j=1}^{t}\frac{1}{y-2j}\leq 2b\ln \frac{y}{y-2t}.
\end{equation}
Altogether, the average degree after $t$ moves is at most $4b+1+2b\ln \frac{y}{y-2t}$. Having this in mind, and that the stage will last as long as $D<|End|-1-\frac{2b}{|End|}$, as well as that $|End| = y-2t$ holds at the end of stage 2, we get that
$t\geq 23b\ln b$ holds.

Note that after this stage is over, the overall maximal degree in Breaker's graph could have increased by at most $23b^2\ln b<b\ln n$, knowing that $b=o\left(\frac{\ln n}{\ln \ln n}\right)$. So, for all $x\in End$ it holds that $d_B(x)<17b\ln n$, which will come handy in the analysis of stage 3.

\medskip

\textbf{Stage 3.} Maker's graph so far consists of disjoint paths.
If Maker can follow the proposed strategy, stage~3 lasts for less than $7b\ln b$ moves and thus Breaker claims less than $7b^2\ln b$ edges. So, for all $x\in End$ after this stage is over, we will have
\begin{equation}
\label{neq:st3}
d_B(x)<17b\ln n + 7b^2\ln b<18b\ln n.
\end{equation}

Now, we show that it is indeed possible for Maker to complete this stage. As the average length of a path in Maker's graph is $2n/|End|=\Theta\left(\frac{n}{b\ln b}\right)\gg n^{3/4}$, the set of near-middle vertices of the longest path is of size $\Theta\left(\frac{n}{b\ln b}\right)$. This, together with the fact that $d_B(x)<18b\ln n$ for all $x \in End$, gives that it is always possible to find a free edge between one endpoint $u$ of $P$ and some vertex from the set of near-middle vertices, as there are at least $\frac{n}{100|End|}-18b\ln n \geq \frac{Cn}{b\ln b}$ vertices nonadjacent to $u$, for some positive constant $C$. 
For a contradiction, suppose that for each vertex $v$ of them it holds that $d_B(v)\geq 18b\ln n$. But then we obtain that Breaker had to claim so far at least $\frac{Cn}{b\ln b}\cdot 18b\ln n > \frac{C_1n\ln n}{\ln b}$ edges. This is in contradiction with the estimated number of edges that Breaker could claim till the end of this stage.

\medskip

\noindent \textbf{Stage 4.} If Maker can follow the proposed strategy in this stage, the number of edges that Breaker claims during stage~4 is less than
\begin{equation}
\label{neq:st4}
14b^2\ln b(b+1)<28b^3\ln b.
\end{equation}
All paths in Maker's graph are longer than $n^{3/4}$. The number of saturated vertices is $o(v(P))$, as otherwise Breaker must have claimed at least $c\cdot n^{1/2}\cdot (n^{3/4}-1)=\Theta(n^{5/4})$, for some positive constant $c$, which is in contradiction with the number of edges that Breaker could claim in the whole game.

Each phase of stage~4 ends after at most $2b+1$ moves of Maker. Indeed, at latest when the pair of vertices $(y_b, y_b')$ is chosen, $y_b$ has no neighbors among vertices $v_{P_1}^1, x_1, x_2, \dots, x_b$, and so after Breaker's move, there has to be at least one free edge among the pairs of vertices \linebreak $\lbrace (y_b,v_{P_1}^1), (y_b,x_1), (y_b,x_2), \dots , (y_b,x_b)\rbrace$, and therefore in her consequent move, Maker completes the phase.

Now, we need to show that in each phase in this stage, Maker can choose the pairs of vertices as described above.

When each of the $7b\ln b-1$ phases begins, using (\ref{neq:st3}) and (\ref{neq:st4}), we get that for each of the endpoints $v$ of the two selected paths it holds that $d_B(v)<18b\ln n+28b^3\ln b<30b^2\ln n=o(n^{3/4})$. The number of saturated vertices on each path $P$ is $o(v(P))$ when stage 4 begins and after adding at most $28b^3\ln b=o\left(\ln^3 n\right)$ additional edges it is still $o(v(P))$, as $v(P)\geq n^{3/4}$. Also, for each $x_i$ and $y_i$, we know that $d_B(x_i, P_2)=O(n^{1/2})$ and $d_B(y_i,P_1)=O(n^{1/2})$. So, for each move $m$, $1\leq m \leq 2b+1$, if $m=2i-1$ is odd, the number of choices for the pairs $(x_i, x_i')$ is $v(P)-3-d_B(v_{P_1}^1)-d_B(v_{P_2}^1)-o(v(P))-\sum_{j=1}^{i-1}d_B(y_j, P_1)>n^{3/4}-3-2\cdot 30b^2\ln n-o(n^{3/4})>(1-o(1))n^{3/4}$. The same calculation applies in the move $m=2i$, for the pairs of vertices $(y_i, y_i')$, $1\leq m \leq 2b$. So, there are enough pairs to choose from. Note that when this stage ends, there is a Hamilton path whose endpoints have degree $d_B(v)<18b\ln n+28b^3\ln b<30b^2\ln n$.

\medskip

\noindent \textbf{Stage 5.}
We know that after stage 4 it holds that $d_B(v_{P}^1)<30b^2\ln n$ and $d_B(v_{P}^2)<30b^2\ln n$. In this stage, provided that Maker can follow it, Breaker can claim at most $2b^2+2b$ edges in total, and so by the end of this stage it holds that $d_B(v_{P}^1)<30b^2\ln n+2b^2+2b<32b^2\ln n$ and $d_B(v_{P}^2)<30b^2\ln n+2b^2+2b<32b^2\ln n$. With reasoning and calculation similar to the one for a phase in stage~4, we get that Maker can have more than $(1/2-o(1))n$ choices for each pair $(x_i, x_i')$, respectively $(y_i, y_i')$, and so she can complete this stage in the proposed time and finish the game.
\end{proof}

\section{Proof of Theorem~\ref{thm:HamSmall}}
\label{sec:hamS}

\begin{proof}
Maker's strategy in the Hamilton Cycle game is divided into three main stages.

\noindent \textbf{Stage 1.} Maker splits the vertices of the board into two sets, $X$ and $I$, such that at the beginning $X=\emptyset$ and $I=V$. Throughout  stage 1, the set $X$ will contain the vertices of vertex disjoint Hamilton-connected subgraphs in Maker's graph. Maker will build each such subgraph one after another, and at any point only one such subgraph is being built, while the others are completed. Maker's graph on $I$ will be a collection of paths, each of length $\geq 0$, denoted by $\cP$, with the set of endpoints denoted by $End(\cP)$. Note that isolated vertices in $\cP$ (viewed as paths of length $0$) appear twice in $End(\cP)$. Both $\cP$ and $End(\cP)$ are updated dynamically. At the beginning, every $v\in I$ is considered as a path of length 0.

During this stage, Maker plays the following two games in parallel.
\begin{enumerate} [$(1)$]
\item In her odd moves, Maker builds $L=L(b,n):=13b\ln n$ Hamilton-con\-ne\-cted subgraphs of order $t=t(b,n):=\frac{1}{2}b\ln^2n$ that are vertex disjoint. She builds them one by one, repeatedly choosing new $t$ isolated vertices from $I$ that are independent in Breaker's graph and moving them to $X$, whenever the previous Hamilton-connected subgraph is completed.
\item In each of her even moves, Maker chooses a vertex $v\in End(\cP)$, $v=v_P^1$ s.t.\ 
\begin{math}
 \displaystyle d_B(v) = \break  \max_{w\in End(\cP)} d_B(w)
\end{math}
 (ties broken arbitrarily) and claims a free edge between $v$ and some other vertex $u \in End(\cP)\setminus \{v\}$, $u\neq v_P^2$.
\end{enumerate}

By Theorem~\ref{thm:FHK}, Maker needs $O(t\ln^2 t)$ moves to build one Hamilton-connected subgraph of order $t$, thus for $L$ such subgraph she needs $O(Lt\ln^2 t)=O(b^2\ln^5 n)$ moves. So, this stage lasts $Cb^2\ln^5 n$, $C>0$ moves.

\begin{figure}[htb]
\begin{center}
\includegraphics[scale=0.8]{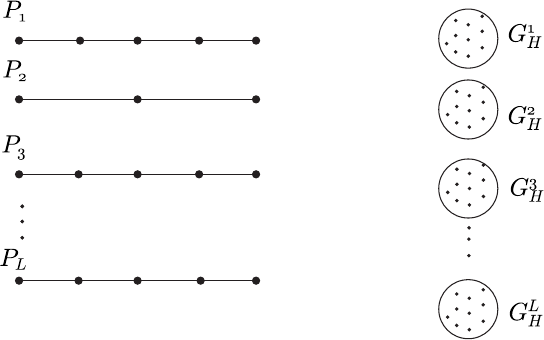}
\end{center}
\caption{Maker's graph at the end of stage 2}
\label{fig:st2}
\end{figure}

\begin{figure}[htb]
\begin{center}
\includegraphics[scale=0.8]{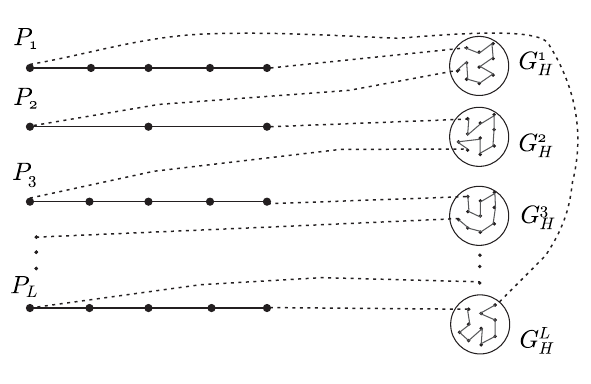}
\end{center}
\caption{Maker's strategy in stage 3}
\label{fig:st3}
\end{figure}

\noindent \textbf{Stage 2.} In this stage, with each of her edges, Maker plays in the same way as in part (2) of stage~1. In each move, she chooses a vertex $v\in End(\cP)$, $v=v_P^1$ s.t. $\displaystyle d_B(v)=\max_{w\in End(\cP)}d_B(w)$ (ties broken arbitrarily) and claims a free edge between $v$ and some other vertex $u \in End(\cP)\setminus\{v\}$, $u\neq v_P^2$.
She plays like this until $|\cP|=L$. This stage lasts $n-|X|-L-\frac{C}{2}b^2\ln^5 n=n-Lt-L-\frac{C}{2}b^2\ln^5 n=n-13b\ln n\left(\frac{1}{2}b\ln^2 n+1\right)-\frac{C}{2}b^2\ln^5 n$ moves.

\noindent \textbf{Stage 3.} At the beginning of this stage there are $L$ Hamilton-con\-nect\-ed subgraphs,\linebreak[4] $G_H^1, G_H^2, \dots, G_H^L$, and $L$ paths, $P_1,P_2,\dots, P_L$, see Figure~\ref{fig:st2}. Before this stage begins, Maker fixes which paths (through which exact endpoints) will be joined to specific Hamilton-con\-nect\-ed subgraphs. She uses the following rule: for each $i$, $1\le i \le L$, $v_{P_i}^1$ and $v_{P_{(i \mod L)+1}}^2$ will be connected to two arbitrary different vertices in $G_H^i$, see Figure~\ref{fig:st3}. In each move, Maker chooses a vertex $v \in End(\cP)$ such that $\displaystyle d_B(v)\ge \max_{w \in End(\cP)}d_B(w)$ (ties broken arbitrarily), connects it to the Hamilton-connected subgraph according to the aforementioned rule, and removes $v$ from $End(\cP)$. In order to do so, Maker plays an auxiliary Box game, pretending to be BoxBreaker. Since Maker has to connect two endpoints to the distinct vertices of some Hamilton-connected subgraph, we can split the vertices of each such subgraph arbitrarily into two sets of equal size. The endpoints of the paths represent the boxes in this game, so there are $2|L|$ boxes, and each box consists of all free edges between one endpoint of the path and half of the vertices in one Hamilton-connected subgraph. This stage will last $2L=26b\ln n$ moves.

It is straightforward to conclude that following the described strategy Maker can build a Hamilton cycle. Indeed, since each subgraph is Hamilton-connected, there exists a Hamilton path between any pair of vertices within one subgraph. These paths circularly connect to paths from $\cP$ to form a Hamilton cycle.

Now we will show that Maker can follow this strategy. We perform the analysis for each stage separately.

\noindent \textbf{Stage 1.} At the beginning of the game, $I=V$ and $X=\emptyset$. For each Hamilton-con\-nect\-ed subgraph that she builds, Maker chooses $t$ vertices $\{v_1,v_2,\dots,v_t\} \in I$ that are isolated in her graph and independent in Breaker's graph. Then, $I=I\setminus \{v_1,v_2,\dots,v_t\}$ and $X=X \cup \{v_1,v_2,\dots,v_t\}$. Since she plays on the set $X$ in every second move, this game can be treated as $(1:2b)$ Hamilton-connected subgraph game.
In her first move, Maker selects the first $t$ vertices $\{v_1,v_2,\dots,v_t\} \in I$ that are independent in Breaker's graph, which are easy to find, as there are only $b$ edges claimed on the board in total. After that, $|I|=|V|-t$ and $X=X\cup \{v_1,v_2,\dots,v_t\}$, implying $|End(\cP)|=2(n-t)$, since each vertex is treated as two endpoints of a path of length $0$. We will first take a closer look at $(2)$.\\
The proof exactly the same as the one for stage 1, part (2), in the proof of Theorem~\ref{thm:HamVerySmall} (see page~\pageref{box}), gives that when stage 1 is over, every vertex in $End(\cP)$ has Breaker's degree less than $16b\ln n$.

Now, we look at part (1). We need to prove two things: first, that Maker can build a Hamilton-connected subgraph on $t=\frac{1}{2}b\ln^2 n$ vertices, among which no edge is claimed by either of players, and second, that Maker can find such $t$ vertices that induce no edge, whenever she decides to build each of her $L$ subgraphs. In order to show that Maker can build a subgraph on $t$ vertices when playing the $(1:2b)$ game, we need to verify the conditions of Theorem~\ref{thm:FHK}. The graph Maker plays the game on is $K_t$, so the degree condition is fulfilled. Also, $\frac{t}{\ln^2 t}>\frac{2b\ln^2 n}{\ln^2 n}=2b$ for values of $b$ that we consider. This gives that Maker can build a Hamilton-connected subgraph on $V(K_t)$ in at most $1+\frac{e(K_t)}{t/\ln^2 t}\leq cb\ln^4 n$ moves, for $0<c<\frac{1}{2}$. \\
We will show that Maker can find $t$ vertices that induce no edge for each subgraph. As building each subgraph requires $cb\ln^4 n$ moves and Maker should build $L=13b\ln n$ of them, this gives in total at most $13cb^2\ln^5 n$ moves. During this number of moves, playing according to $(2)$, Maker could touch at most $2\cdot 13cb^2\ln^5 n$ vertices in $I$. Also, before selecting the $t$ vertices for her last Hamilton-connected subgraph, Maker has already removed $(L-1)\cdot t=\frac{13}{2}b^2\ln^3 n-\frac{1}{2}b\ln^2 n$ vertices from $I$. So, before choosing vertices for each subgraph, there are at least $n'=n-26cb^2\ln^5 n - \frac{13}{2}b^2\ln^3 n + \frac{1}{2}b\ln^2 n>\frac{n}{2}$ vertices in $I$ that are isolated in Maker's graph. According to Maker's stra\-te\-gy in part (2), every vertex in $End(\cP)$ has Breaker's degree less than $16b\ln n$. Applying Theorem~\ref{thm:HaSze}, we can partition $n'$ vertices into at least $16b\ln n$ independent sets, each of size at least $\frac{n'}{16b\ln n}=\Omega\left(\sqrt{n\ln^3 n}\right)>t$.

\noindent \textbf{Stage 2.} When this stage begins, $|End(\cP)|=2n-26cb^2\ln^5 n - 13b^2\ln^3 n$. Here again we look at the Box game, with Maker taking the role of BoxBreaker. The boxes in this game are vertices in $End(\cP)$, which have at least $|End(\cP)|-16b\ln n$ elements each. The difference here is that BoxMaker claims $4b$ elements of the board in each move, since Maker responds to $b$ edges of Breaker in the real game. Formally, Maker plays the game $B(|End(\cP)|,|End(\cP)|\cdot |End(\cP)|-16b\ln n, 4b, 1)$, pretending to be BoxBreaker. Here again, 
a calculation similar to the one in~(\ref{eqBox}) gives that playing on the board of order $|End(\cP)|$ until the end, BoxMaker cannot claim more than $8b\ln n$ elements in one box. This means that when stage 2 is over, there are $L$ paths in $\cP$ whose endpoints have degree in Breaker's graph less than $16b\ln n+8b\ln n=24b\ln n$.

\noindent\textbf{Stage 3.} Maker connects $L$ paths through $L$ Hamilton-connected subgraphs into a Hamilton cycle, by playing the Box game as BoxBreaker. Now there are $2L$ boxes in the game, representing each of the endpoints of $L$ paths in $\cP$. After stage~2 is over, there are less than $24b\ln n$ Breaker's edges incident to each $v \in End(\cP)$. Each box consists of all free edges between one endpoint of the path and half of the vertices in one subgraph, and so, each box is of size more than $s=\frac{t}{2}-24b\ln n=\frac{1}{4}b\ln^2 n-24b\ln n$. Each Breaker's edge is counted as claiming one element of the board, so the game played is $B(2L,2L s,b,1)$. The size of the largest box that BoxMaker could fully claim until the end of the game playing with bias $b$ is at most
\begin{displaymath}
l=\frac{b}{2L}+\frac{b}{2L-1}+\cdots+\frac{b}{1}=b\sum_{i=1}^{2L}\frac{1}{i}\le b(1+\ln 2L)<s.
\end{displaymath}
 This means that BoxMaker is unable to fully claim any box in this game before BoxBreaker claims an element in it. So, this stage ends in $2L$ moves and at its end, Maker's graph contains a Hamilton cycle.
The total number of moves in this stage is $2L=26b\ln n$, so the game lasts altogether at most
$n+\frac{13}{2}cb^2\ln^5n-\frac{13}{2}b^2\ln^3n+13b\ln n=n+O(b^2\ln^5 n)$ moves.
\end{proof}

\section{Proof of Theorem~\ref{thm:delay}}
\label{sec:breakfast}

\begin{proof}
\begin{enumerate}[$(i)$]

\item 
Breaker's strategy consists of claiming all the edges of some clique $C$ on $\frac b2$ vertices such that no vertex of $C$ is touched by Maker and maintaining this clique. Let $C_i$ be the clique of Breaker before his $i$th move. Let $u_i$ be the largest integer such that $b_i\leq b$, where $b_i:={{u_i+1}\choose {2}} + (u_i+1)|C_i|$. In his $i$th move, Breaker chooses $u_i+1$ vertices $\{v_1,v_2,\dots,v_{u_i+1}\}\in V(K_n)\backslash V(C_i)$ such that $d_M(v_j)=0$, for $1\leq j \leq u_i+1$ and claims the edges $\{\{v_j,v_k\}: 1\leq j<k\leq u_i+1\}\cup\{\{v_j,v\}:1\leq j\leq u_i+1, v\in V(C_i)\}$. He also claims $b-b_i$ arbitrary edges which we will disregard in our analysis. Maker, on the other hand, can touch at most one vertex from $C_i$ in his following move, so right before Breaker's $(i+1)$st move, $|C_{i+1}|\geq |C_{i}|+u_i$. It is easy to verify that $u_i\geq 1$ while $|C_i|< \frac b2$, and after that $|C_{i+1}|\geq |C_{i}|$, provided there is at least one vertex in $V(K_n)\backslash V(C_i)$ isolated in Maker's graph. What we need to show is that there are enough vertices for Breaker to create a clique with $\frac b2$ vertices. By the definition of $u_i$, until $|C_i|\leq \frac b 4-2$, $u_i\geq 3$. While $|C_i|\leq \frac b3-1$, $u_i\geq 2$ and if $|C_i|\leq \frac b2-1$, then $u_i\geq 1$. It takes at most
\begin{displaymath}
\frac{\frac{b}{4}-2}{3}+\frac{\frac{b}{3}-1-\frac{b}{4}+1}{2}+\frac{\frac{b}{2}-1-\frac{b}{3}}{1}=\frac {7b-40}{24}
\end{displaymath}
moves to create a clique. Note that in any of his moves, Breaker can enlarge his clique by more than stated number of vertices. Consequently, the number of rounds will decrease. In any case, the number of vertices touched by any of the players is upper bounded by
\begin{displaymath}
6\frac{\frac{b}{4}-2}{3}+5\frac{\frac{b}{3}-1-\frac{b}{4}+1}{2}+4\frac{\frac{b}{2}-1-\frac{b}{3}}{1}=\frac{33b-192}{24}<n
\end{displaymath} 
knowing that $b=o(n)$. 
When it is no longer possible for Breaker to add new  vertices, isolated in Maker's graph, to his clique, Maker needs at least one move to connect each vertex $w\in V(C)$ to some vertex $v\in V(K_n)\setminus V(C)$, which has degree at least one in Maker's graph. When this happens, $d_M(v)$ increases by one. Maker has to claim at least $\frac b2$ edges to touch all vertices in Breaker's clique. In the smallest graph that contains a perfect matching all vertices have degree one. The double number of extra edges that Maker claims is thus $\sum_{v\in V(K_n)}(d_M(v)-1)\geq \frac b2$, by the aforementioned analysis. The number of extra edges Maker has claimed in this game is thus $\frac{\sum_{v\in V(K_n)}(d_M(v)-1)}2\geq \frac b4$.

\item  The proof is similar to the one for \textit{(i)}. Let $i$ be the smallest integer such that $C_i$ is the clique in Breaker's graph of order $\frac b2$, created in the same way as in \textit{(i)} and for every $w \in V(C_i)$ it holds $d_M(w)=0$. In the Hamilton Cycle game, Breaker considers a vertex $w\in C_i$ to be removed from his clique only if $d_M(w)=2$. So, when $|C_i|=\frac b2$ and for every $w \in V(C_i)$, $d_M(w)=0$ holds, Maker needs two moves to remove a vertex from $C$. This means that in $k$th move, $k>i$, Breaker chooses one untouched vertex $v\in V(K_n)\backslash V(C_k)$ and enlarges his clique by one. So, after each two moves, at most one vertex can be removed, and $|C_{k+2}|\geq |C_k|+1$ holds. This is clearly possible while $|C|\leq b$. What remains to be proved is that there are enough untouched vertices until Breaker creates a clique of order $b$. From $\textit{(i)}$ we know that at most $\frac {33b-192}{24}$ vertices are touched until the clique of order $\frac b2$ is created in Breaker's graph. After that in at most $b$ more moves Breaker enlarges his clique to order $b$, and at that point the total of at most $\frac{57b-192}{24}<n$ vertices are touched. When $d_M(v)\geq 1$ for every $v\in V(K_n)\backslash V(C)$, Breaker still enlarges his clique by adding to it a vertex of degree 1 in Maker's graph. However, from that point on, Maker needs one move to remove such a vertex of degree one from $C_{k+1}$ and $|C_{k+1}|\geq |C_{k}|$. This implies that Breaker can maintain a clique in his graph of order $b$ until for all vertices $v\in V(K_n)\backslash V(C)$ it holds that $d_M(v)\geq2$.\\
For every vertex $w\in V(C)$, $d_M(w)\leq 1$. In order to connect a
vertex $w \in V(C)$ to some $v\in V(K_n)\backslash V(C)$ Maker needs at
least one move. In a graph that contains a Hamilton cycle all the
vertices have degree at least 2. So, the number of extra moves that
Maker has made when the game is over is $\frac{\sum_{v\in
V(K_n)}(d_M(v)-2)}2$. By the given strategy, the sum $\sum_{v\in V(K_n)}(d_M(v)-2)$ grows by one for every $w \in V(C)$, and thus the number of extra moves is
$\frac{\sum_{v\in V(K_n)}(d_M(v)-2)}2 \geq \frac b2$.
\end{enumerate}
\end{proof}

\acknowledgements
We would like to thank Asaf Ferber and Dan Hefetz for the fruitful discussions at early stages of this research project. We also thank Dennis Clemens for helpful comments. Finally, we are very grateful to the anonymous referees whose remarks greatly improved the quality of this paper.

\nocite{*}
\bibliographystyle{abbrvnat}
\bibliography{fast_dmtcs-new}

\begin{thebibliography}{17}
\providecommand{\natexlab}[1]{#1}
\providecommand{\url}[1]{\texttt{#1}}
\expandafter\ifx\csname urlstyle\endcsname\relax
  \providecommand{\doi}[1]{doi: #1}\else
  \providecommand{\doi}{doi: \begingroup \urlstyle{rm}\Url}\fi

\bibitem[Beck(2008)]{BeckBook}
J.~Beck.
\newblock \emph{Combinatorial games: Tic-Tac-Toe Theory}, volume 114.
\newblock Cambridge University Press, 2008.

\bibitem[Chv{\'a}tal and Erd\H{o}s(1978)]{CE}
V.~Chv{\'a}tal and P.~Erd\H{o}s.
\newblock Biased positional games.
\newblock \emph{Annals of Discrete Math}, 2:\penalty0 221--228, 1978.

\bibitem[Clemens and Mikala{\v{c}}ki(2018)]{CM}
D.~Clemens and M.~Mikala{\v{c}}ki.
\newblock How fast can {M}aker win in fair biased games?
\newblock \emph{Discrete Mathematics}, 341\penalty0 (1):\penalty0 51--66, 2018.

\bibitem[Clemens et~al.(2012)Clemens, Ferber, Krivelevich, and Liebenau]{CFKL}
D.~Clemens, A.~Ferber, M.~Krivelevich, and A.~Liebenau.
\newblock Fast {S}trategies {I}n {M}aker--{B}reaker {G}ames {P}layed on
  {R}andom {B}oards.
\newblock \emph{Combinatorics, Probability and Computing}, 21\penalty0
  (6):\penalty0 897--915, 2012.

\bibitem[de~Caen(1998)]{DC98}
D.~de~Caen.
\newblock An upper bound on the sum of squares of degrees in a graph.
\newblock \emph{Discrete Mathematics}, 185\penalty0 (1-3):\penalty0 245--248,
  1998.

\bibitem[Ferber and Hefetz(2011)]{FH2}
A.~Ferber and D.~Hefetz.
\newblock Winning strong games through fast strategies for weak games.
\newblock \emph{the electronic journal of combinatorics}, 18\penalty0
  (1):\penalty0 P144, 2011.

\bibitem[Ferber and Hefetz(2014)]{FH1}
A.~Ferber and D.~Hefetz.
\newblock Weak and strong $k$-connectivity games.
\newblock \emph{European Journal of Combinatorics}, 35:\penalty0 169--183,
  2014.

\bibitem[Ferber et~al.(2012)Ferber, Hefetz, and Krivelevich]{FHK}
A.~Ferber, D.~Hefetz, and M.~Krivelevich.
\newblock Fast embedding of spanning trees in biased {M}aker--{B}reaker games.
\newblock \emph{European Journal of Combinatorics}, 33\penalty0 (6):\penalty0
  1086--1099, 2012.

\bibitem[Gebauer and Szab{\'o}(2009)]{GS}
H.~Gebauer and T.~Szab{\'o}.
\newblock Asymptotic random graph intuition for the biased connectivity game.
\newblock \emph{Random Structures \& Algorithms}, 35\penalty0 (4):\penalty0
  431--443, 2009.

\bibitem[Hajnal and Szemer{\'e}di(1970)]{HaSze}
A.~Hajnal and E.~Szemer{\'e}di.
\newblock Proof of a conjecture of {P}.\ {E}rd{\H{o}}s.
\newblock \emph{Combinatorial theory and its applications}, 2:\penalty0
  601--623, 1970.

\bibitem[Hefetz and Stich(2009)]{HS09}
D.~Hefetz and S.~Stich.
\newblock On two problems regarding the {H}amiltonian cycle game.
\newblock \emph{the electronic journal of combinatorics}, 16\penalty0
  (1):\penalty0 28, 2009.

\bibitem[Hefetz et~al.(2009{\natexlab{a}})Hefetz, Krivelevich, Stojakovi{\'c},
  and Szab{\'o}]{HKSS2}
D.~Hefetz, M.~Krivelevich, M.~Stojakovi{\'c}, and T.~Szab{\'o}.
\newblock Fast winning strategies in {M}aker--{B}reaker games.
\newblock \emph{Journal of Combinatorial Theory, Series B}, 99\penalty0
  (1):\penalty0 39--47, 2009{\natexlab{a}}.

\bibitem[Hefetz et~al.(2009{\natexlab{b}})Hefetz, Krivelevich, and
  Szab{\'o}]{HKS09}
D.~Hefetz, M.~Krivelevich, and T.~Szab{\'o}.
\newblock Hamilton cycles in highly connected and expanding graphs.
\newblock \emph{Combinatorica}, 29\penalty0 (5):\penalty0 547--568,
  2009{\natexlab{b}}.

\bibitem[Hefetz et~al.(2014)Hefetz, Krivelevich, Stojakovi{\'c}, and
  Szab{\'o}]{HKSSbook}
D.~Hefetz, M.~Krivelevich, M.~Stojakovi{\'c}, and T.~Szab{\'o}.
\newblock \emph{Positional Games. Oberwolfach Seminars, vol. 44}.
\newblock Birkh{\"a}user, 2014.

\bibitem[Krivelevich(2011)]{K}
M.~Krivelevich.
\newblock The critical bias for the {H}amiltonicity game is $(1+ o (1)) n / \ln
  n$.
\newblock \emph{Journal of the American Mathematical Society}, 24\penalty0
  (1):\penalty0 125--131, 2011.

\bibitem[P{\'o}sa(1976)]{Posa}
L.~P{\'o}sa.
\newblock Hamiltonian circuits in random graphs.
\newblock \emph{Discrete Mathematics}, 14\penalty0 (4):\penalty0 359--364,
  1976.

\bibitem[West(2001)]{West}
D.~B. West.
\newblock \emph{Introduction to graph theory}, volume~2.
\newblock Prentice hall Upper Saddle River, 2001.

\end{thebibliography}
\label{sec:biblio}

\appendix

\section{Proof of Theorem~\ref{thm:degreeGame}}

\begin{proof} The proof is very similar (in fact, almost identical) to the proof of~\cite[Theorem 1.2]{GS}, so we omit some of the calculations. 
At any point of the game, for every vertex
$v\in V(G)$, let $\danger(v) := d_B(v) - 2b \cdot d_M(v)$ be the
\emph{danger value} of $v$. For a subset $X\subseteq V(G)$, define
$\avdanan(X)=\frac{\sum_{v\in X} \danger(v)}{|X|}$, the average
danger of vertices in $X$. A vertex $v\in V(G)$ is called
\emph{dangerous} if $d_M(v)\leq c-1$.

The game ends when either all the vertices have degree at least $c$
in Maker's graph (and Maker won) or there exists a dangerous vertex
$v\in V(G)$ for which $\danger(v)>b(2\ln n+1)$ (and Maker failed the degree condition) or $d_B(v)\geq
d_G(v)-c+1$ (and Breaker won). Note that since
\begin{align}
d_G(v)-c+1-2b\cdot(c-1) &\geq d_G(v)-c-2bc>b(2\ln n+1),
\end{align}
it is enough to say that Maker fails if $\danger(v)>b(2\ln n+1)$
for some vertex $v\in V(G)$ with $d_M(v)\leq c-1$.

{\bf Maker's strategy $S_M$:} Before her $i$th move Maker identifies
a dangerous vertex $v_i$ with
\begin{align*} \danger(v_i)=\max\{\danger(v): v\in V(G) \textrm{ and }
v \textrm{ is dangerous}\}, \end{align*} and claims an arbitrary free
edge $\{v_i,u_i\}$, where ties are broken arbitrarily.

Suppose towards a contradiction that Breaker has a strategy $S_B$ by which Maker, who plays according to the strategy $S_M$ as
suggested above, fails. That is, playing according $S_B$, Breaker can
ensure that at some point during the game, there exists a dangerous
vertex $v\in V(G)$ for which $\danger(v)>b(2\ln n+1)$.

Let $s$ be the length of this game and let $A=\{v_1,v_2,\dots,v_s\}$ be the set of active vertices which contains all the vertices in Maker's graph of degree less than $c$ that Maker selected as the most dangerous. Note here that vertices $v_1,v_2,\dots, v_s$ do not have to be distinct vertices from $V(K_n)$, since it takes $c$ moves to remove a vertex from the set of active vertices. So, $A$ can have less than $s$ elements. By strategy $S_M$, in her $i$th move, Maker claims an edge incident with $v_i$ (for all $i$ except for $i=s$, as the game is considered to be over before her $s$th move). For $0\le i\le s-1$, let $A_{i}=\{v_{s-i},v_{s-i+1},\dots, v_s\}$.

Following the notation of~\cite{GS}, let $\danger_{M_i}(v)$ and $\danger_{B_i}(v)$ denote danger values of vertex $v\in V(K_n)$ immediately before $i$th move of Maker, respectively Breaker.

Analogously to the proof of Theorem 1.2 by~\cite{GS}, we state the following lemmas. Next lemma is useful for estimating the change in average danger value after Maker's move.

\begin{lemma}[\cite{GS}, Lemma 3.3]
\label{lem:chM}
Let $i$ be an integer, $1\leq i \leq s-1$.
\begin{itemize}
\item[(i)] If $A_{i} \neq A_{i-1}$, then $\avdanan_{M_{s-i}}(A_{i}) - \avdanan_{B_{s-i+1}}(A_{i-1}) \geq 0$.
\item [(ii)] If $A_{i} = A_{i-1}$, then $\avdanan_{M_{s-i}}(A_{i}) - \avdanan_{B_{s-i+1}}(A_{i-1}) \geq \frac{2b}
{|A_{i}|}.$
\end{itemize}
\end{lemma}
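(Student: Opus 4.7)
The plan is to track the danger values directly, exploiting the simple fact that between the instants $M_{s-i}$ and $B_{s-i+1}$ the only move played is Maker's $(s-i)$th move, in which he claims the edge $\{v_{s-i},u_{s-i}\}$. Hence $\danger(v_{s-i})$ and $\danger(u_{s-i})$ each decrease by exactly $2b$ (because $d_M$ of each endpoint grows by $1$), while $\danger(v)$ is unchanged for every other vertex $v$. This, together with Maker's greedy selection rule $S_M$, is really all the structure we need.

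For case (i) we have $v_{s-i}\notin A_{i-1}$, so $|A_i|=|A_{i-1}|+1$. Setting $k=|A_{i-1}|$, $S=\sum_{v\in A_{i-1}}\danger_{M_{s-i}}(v)$, $d=\danger_{M_{s-i}}(v_{s-i})$, and $\varepsilon\in\{0,2b\}$ according to whether $u_{s-i}\in A_{i-1}$, a direct computation gives
\[
\avdanan_{M_{s-i}}(A_i)-\avdanan_{B_{s-i+1}}(A_{i-1})=\frac{d+S}{k+1}-\frac{S-\varepsilon}{k}=\frac{kd-S+(k+1)\varepsilon}{k(k+1)}.
\]
This is nonnegative once $d\geq S/k$, which follows from the fact that Maker picked $v_{s-i}$ as the most dangerous among all currently dangerous vertices: every $v\in A_{i-1}$ was chosen as the most dangerous at some later move $M_j$ where $d_M(v)<c$ held at that time, and by monotonicity of Maker's degree this still holds at the earlier instant $M_{s-i}$, so Maker's greedy rule forces the desired comparison.

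For case (ii), $v_{s-i}\in A_{i-1}$ forces $A_i=A_{i-1}$, so both averages are taken over the same set of size $|A_i|$. Since $v_{s-i}\in A_{i-1}$ contributes a drop of exactly $2b$ to $\sum_{v\in A_{i-1}}\danger(v)$, and since $u_{s-i}\neq v_{s-i}$ can contribute an additional $2b$ if also in $A_{i-1}$, the sum of dangers over $A_{i-1}$ decreases by at least $2b$; dividing by $|A_i|$ yields the claim. The only genuinely subtle point in the whole argument is the \emph{still dangerous} observation needed in case (i), which I expect to be the main source of care but nothing more; once that is settled the lemma reduces to the one-line algebraic rearrangement above.
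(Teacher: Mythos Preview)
Your argument is correct. The paper itself does not prove this lemma; it merely quotes it as \cite[Lemma 3.3]{GS} and uses it as a black box, so there is no ``paper's own proof'' to compare against. Your proof is exactly the standard Gebauer--Szab\'o argument: the only move between the instants $M_{s-i}$ and $B_{s-i+1}$ is Maker's $(s-i)$th move, which lowers $\danger(v_{s-i})$ and $\danger(u_{s-i})$ by $2b$ each and leaves all other danger values unchanged; the rest is bookkeeping together with the key observation that every vertex of $A_{i-1}$ is still dangerous at time $M_{s-i}$ (since it will be selected by Maker at a later step, hence has Maker-degree below $c$ at that later step, hence also earlier), so that Maker's greedy rule forces $\danger_{M_{s-i}}(v_{s-i})\geq \danger_{M_{s-i}}(v)$ for all $v\in A_{i-1}$. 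Your identification of this ``still dangerous'' point as the only nontrivial step is accurate.
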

 To estimate the change in average danger value after Breaker's move, we use the following lemma.
\begin{lemma}[\cite{GS}, Lemma 3.4(i)]
\label{lem:chB}
Let $i$ be an integer, $1\leq i \leq s-1$. Then
\begin{displaymath}
\avdanan_{M_{s-i}}(A_{i}) - \avdanan_{B_{s-i}}(A_{i}) \leq \frac{2b}
{|A_{i}|}.
\end{displaymath}
\end{lemma}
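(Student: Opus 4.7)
The plan is a direct counting argument on how Breaker's $(s-i)$-th move changes the total (and hence the average) danger on the fixed vertex set $A_i$. The first observation to record is that $A_i = \{v_{s-i},\dots,v_s\}$ is defined retrospectively from the entire play, so it is \emph{the same set} at the two snapshots $B_{s-i}$ (just before Breaker's move) and $M_{s-i}$ (just before Maker responds); in particular both averages have identical denominator $|A_i|$, and it suffices to bound the change in $\sum_{v\in A_i}\danger(v)$ by $2b$.

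The key step is then the observation that during Breaker's $(s-i)$-th move Maker claims no edge, so $d_M(v)$ is unchanged for every vertex, and hence the change in $\danger(v) = d_B(v) - 2b\cdot d_M(v)$ equals the change in $d_B(v)$ alone. Breaker claims $b$ new edges, and each such edge contributes at most $2$ to $\sum_{v\in A_i} d_B(v)$ (exactly $2$ if both endpoints lie in $A_i$, $1$ if one endpoint does, $0$ otherwise). Summing over Breaker's $b$ edges yields the desired $2b$ bound, from which the lemma follows after dividing by $|A_i|$.

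There is no serious obstacle here: the argument is essentially bookkeeping. The only points requiring care are that the retrospective definition of $A_i$ genuinely makes it fixed between the two snapshots (so the two averages are taken over the same set), and that $d_M$ is frozen throughout a single Breaker turn; both are immediate from the definitions. By contrast, the companion lemma \lref{chM} for Maker's move is more delicate, since there $A_i$ can differ from $A_{i-1}$ and the specific choice of $v_{s-i}$ as a most-dangerous active vertex under the strategy $S_M$ must be exploited — but that is precisely the role already played by the preceding lemma, and is not needed here.
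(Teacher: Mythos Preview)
Your argument is correct and is exactly the standard bookkeeping proof: the set $A_i$ is fixed between the two snapshots, Maker plays no edge during Breaker's turn so only $d_B$ changes, and Breaker's $b$ edges contribute at most $2b$ to $\sum_{v\in A_i} d_B(v)$. The paper does not supply its own proof of this lemma but simply imports it from~\cite{GS}; your proof is precisely the argument given there.
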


Combining Lemmas~\ref{lem:chM} and~\ref{lem:chB} we obtain the following corollary which estimates the change in average danger value after a whole round is played.

\begin{corollary}[\cite{GS}, Corollary 3.5]
\label{cor:diffD}
Let $i$ be an integer, $1\leq i \leq s-1$.
\begin{itemize}
\item[(i)] If $A_{i} = A_{i-1}$, then $\avdanan_{B_{s-i}}(A_{i}) - \avdanan_{B_{s-i+1}}(A_{i-1}) \geq 0$.
\item [(ii)] If $A_{i} \neq A_{i-1}$, then $\avdanan_{B_{s-i}}(A_{i}) - \avdanan_{B_{s-i+1}}(A_{i-1}) \geq -\frac{2b}
{|A_{i}|}$,
\end{itemize}
\end{corollary}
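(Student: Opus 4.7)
The plan is to derive \cref{diffD} directly as an additive combination of \lref{chM} and \lref{chB}, splitting round $s-i$ into its Breaker half and its Maker half. Since Breaker plays first, the three checkpoints around round $s-i$ are chronologically ordered as $B_{s-i} \prec M_{s-i} \prec B_{s-i+1}$, so I would telescope the one-round difference as
\[
\avdanan_{B_{s-i}}(A_i) - \avdanan_{B_{s-i+1}}(A_{i-1}) = \bigl(\avdanan_{B_{s-i}}(A_i) - \avdanan_{M_{s-i}}(A_i)\bigr) + \bigl(\avdanan_{M_{s-i}}(A_i) - \avdanan_{B_{s-i+1}}(A_{i-1})\bigr).
\]
The first summand is controlled by \lref{chB}, which I would immediately rewrite as the lower bound $\avdanan_{B_{s-i}}(A_i) - \avdanan_{M_{s-i}}(A_i) \geq -\tfrac{2b}{|A_i|}$. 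The second summand is exactly the quantity bounded from below in \lref{chM}, with the same active set $A_i$ appearing at the intermediate checkpoint $M_{s-i}$, so the two inequalities can be added without any further manipulation.

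The rest is a straightforward case split. If $A_i = A_{i-1}$, then \lref{chM}(ii) contributes $+\tfrac{2b}{|A_i|}$, which precisely cancels the $-\tfrac{2b}{|A_i|}$ from the reformulated \lref{chB}, giving $\avdanan_{B_{s-i}}(A_i) - \avdanan_{B_{s-i+1}}(A_{i-1}) \geq 0$, i.e.\ part (i). If $A_i \neq A_{i-1}$, then \lref{chM}(i) contributes $0$, so the two bounds sum to $-\tfrac{2b}{|A_i|}$, which is part (ii). There is no real obstacle here, since both input lemmas are already stated with averages taken over exactly $A_i$ at the central checkpoint $M_{s-i}$; the only point requiring care is matching the indexing convention — that Breaker's $(s{-}i{+}1)$st move chronologically follows Maker's $(s{-}i)$th move, and that Breaker's $(s{-}i)$th move precedes it — so that the telescoping above has the three checkpoints in the right order.
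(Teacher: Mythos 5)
Your proposal is correct and takes essentially the same route as the paper: the paper obtains \cref{diffD} by ``combining'' \lref{chM} and \lref{chB} (citing Corollary 3.5 of~\cite{GS}), which is precisely your telescoping through the intermediate checkpoint $M_{s-i}$, with \lref{chB} rewritten as the lower bound $\avdanan_{B_{s-i}}(A_i)-\avdanan_{M_{s-i}}(A_i)\geq -\frac{2b}{|A_i|}$ and the case split on whether $A_i=A_{i-1}$. Your indexing of the checkpoints ($B_{s-i}\prec M_{s-i}\prec B_{s-i+1}$, Breaker moving first) is also the convention used in the paper, so nothing is missing.
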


To complete the proof, we want to show that before Breaker's first move $\danger_{B_1}(A_{s-1})>0$, thus obtaining a contradiction.\\
Let $r$ denote the number of distinct vertices in $A_1$ and let $i_1<i_2<\dots<i_{r-1}$ be the indices for which $A_{i_j}\neq A_{i_{j}-1}$ holds, for $1\leq j\leq r-1$. Then $|A_{i_j}|=j+1$.

Recall that since Maker fails in her $s$th move,
the danger value of $v_s$ immediately before $B_s$ is
\begin{equation}
\danger_{B_s}(v_s)>2b\ln n.\label{dangBgI0}
\end{equation}

We have that
\begin{eqnarray}\label{eq:lastlinehugecalc}
\avdanan_{B_1}(A_{s-1}) & = & \avdanan_{B_s}(A_{0}) +
\sum_{i=1}^{s-1}
\left( \avdanan_{B_{s-i}}(A_{i}) - \avdanan_{B_{s-i+1}}(A_{i-1}) \right) \nonumber \\
& \geq & \avdanan_{B_s}(A_{0}) + \sum_{j=1}^{r-1} \left(
\avdanan_{B_{s-i_j}}(A_{i_j}) - \avdanan_{B_{s-i_j+1}}(A_{i_j-1})
\right)\nonumber \\
&& \quad\quad\quad \mbox{[by Corollary~\ref{cor:diffD} $(i)$]} \nonumber \\
& \geq & \avdanan_{B_s}(A_{0}) - \sum_{j=1}^{r-1} \frac{2b}{j+1} \nonumber \\
&& \quad\quad\quad  \mbox{[by Corollary~\ref{cor:diffD}$(ii)$]} \nonumber \\
& \geq & \avdanan_{B_s}(A_{0}) - 2b\ln n \nonumber\\
& > & 0.
\end{eqnarray}
\end{proof}

\section{Proof of Theorem~\ref{thm:HamNotFast} }

\begin{proof} The proof of this theorem is almost identical to the proof of~\cite[Theorem 1]{K}, so we omit most of the details. Throughout the proof we assume that the edges of $H$ were claimed by Breaker.

For given $\varepsilon$, we take $\delta=\delta(\varepsilon)\leq \varepsilon/4$, and $n_0:=n_0(\delta,\varepsilon)=e^{(10/\delta)^5}$.

Similarly to the proof of~\cite[Theorem 1]{K}, we also need to set
\begin{displaymath}
 \gamma_0=\gamma_0(n)=\frac{1}{\ln^{0.49}n} \text{ and }
k_0=k_0(n)=\gamma_0n=\frac{n}{\ln^{0.49}n}.
\end{displaymath}
Given a graph $G$, an edge $e\not \in E(G)$ is called a \emph{booster} if its addition to $G$ creates either a Hamiltonian graph, or a graph whose maximum path is longer than the one in $G$.

Maker's strategy consists of the following three main stages.

{\bf Stage 1.} In this stage, Maker creates a $k_0$\emph{-expander}, in at most $12n$ moves, that is, after this stage,
Maker's graph $M$ satisfies the following property:
\begin{displaymath}
 |N_M(X)\setminus X|\geq 2|X|,\text{ for every } X\subset V(K_n)\text{ of size }
|X|\leq k_0.
\end{displaymath}

{\bf Stage 2.} Maker turns her expander into a connected graph within at most $n$ moves.

{\bf Stage 3.} Maker turns the connected expander into a Hamiltonian graph within at most $n$ moves.

Following the proof of~\cite[Theorem 1]{K}, we can prove that Maker can complete all the stages.

{\bf Stage 1.} In order to describe the strategy of Maker in this stage, for every vertex $v\in V(K_n)$ let us define the function $dang(v):=d_B(v)-2b\cdot d_M(v)$. The strategy of Maker is the mo\-di\-fied strategy from~\cite[Theorem~1.2]{GS}, already used in the proof of~\cite[Theorem~1]{K}. Maker's goal is to achieve minimum degree 12 in his graph in $12n$ moves. The strategy \emph{S} is the following: While there exists a vertex in Maker's graph of degree less than 12, Maker chooses a vertex $v$ of degree less than $12$ with the largest value of $dang(v)$ and claims a random free edge $e$ incident to it. Similarly as it is done in~\cite[Lemma~3]{K}, the argument of~\cite[Theorem~1.2]{GS} can be used to obtain that using strategy \emph{S} for every vertex $v\in V(K_n)$ Maker can claim at least 12 edges incident to it before Breaker has claimed $(1-2\delta)n$ edges incident to $v$ (for details, we refer the reader to~\cite{GS,K}). Note now, that we consider that all the edges of $H$ were claimed by Breaker. This gives that for every vertex $v\in V(K_n)$ at the end of this stage in Breaker's graph it holds: \\
\begin{equation}
d_B(v)\leq (1-\delta)n. \label{degB}
\end{equation}

Now, we show that after this stage, the Maker's graph $M$ is indeed a $k_0$-expander. Following the argument of~\cite[Lemma~4]{K}, we will suppose that $M$ is not a $k_0$-expander. Then, there exists a subset $A$ of size $|A|=i\leq k_0$ in $M$ at the end of this stage such that $N_M(A)$ is contained in a set $B$ of size at most $2i-1$. As the minimum degree in $M$ is 12, we can take that $i\geq 5$. Also, there are at least $6i$ edges in $M$ incident to $A$. Let $e=\{p,q\}$ be an edge that Maker has chosen with $p\in A\cup B$. As $d_B(v)\leq (1-\delta)n$ holds for every vertex $v\in V(K_n)$ (by~\ref{degB}), and Maker's degree was at most 11 at the time he chose $e$, there were at least $\delta n-12$ free edges incident with $p$. When choosing $e$, the probability that $q\in A\cup B$ is at most $\frac{|A\cup B|-1}{\delta n - 12}$, independently of the previous course of the game. Consequently, the probability that all $6i$ edges belong to $A\cup B$ is at most $\left(\frac{3i-2}{\delta n -12}\right)^{6i}$. Taking the sum over all relevant values of $i$, in the same way as in~\cite[Theorem 1]{K}, we obtain that the probability that the graph $M$ is not a $k_0$-expander is at most
\begin{displaymath}
\sum_{5\leq i \leq k_0}{n \choose i}{{n-i} \choose {2i-1}}\left(\frac{3i-2}{\delta n -12}\right)^{6i}\leq \sum_{5\leq i \leq k_0}\left[4^5e^3\left(\frac{i}{n}\right)^3\frac{1}{\delta^6}\right]^i.
\end{displaymath}

Let $g(i)$ denote the $i$th term of the above sum (same as in~\cite{K}). Similar calculations as in~\cite{K} give us that for $5\leq i \leq \sqrt{n}$, $g(i)<\left(O(1)(\ln^{6/5} n)n^{-3/2}\right)^6=o(1/n)$. Also, for $\sqrt{n}<i\leq k_0$, $g(i)\leq \left(\frac{4^5e^3\gamma_0^3}{\delta^6}\right)^{\sqrt{n}}=o(1/n)$. Thus, almost surely, Maker's graph satisfies the required property after stage~1. Notice that after stage~1, we consider that Breaker has claimed at most $12nb+e(H)<\frac{13n^2}{\ln n}$ edges.

\textbf{Stage 2.} Using~\cite[Lemma 2]{K}, every connected component of Maker's graph is of size at least $c=\frac{3n}{\ln^{0.49}n}$, thus there are $\Omega\left(\frac{n^2}{\ln^{0.98}n}\right)$ edges between any two connected components in the complete graph. Maker needs at most $n/c-1$ moves to connect all the components into one, during which time Breaker can additionally claim at most $bn/c$ edges. In total, after these two stages, Breaker claimed less than $\frac{14n^2}{\ln n}$ edges, thus we conclude that most of the edges between any two Maker's components are still free and Maker can merge all the components into one component and complete stage~2 in her next $n/c<n$ moves.

\textbf{Stage 3.} When stage~2 ends, Maker's graph $M$ is connected and either it is already Hamiltonian or, by~\cite[Lemma 1]{K}, it has at least $k_0^2/2=\frac{n^2}{2\ln^{0.98}n}$ boosters (as the expansion property does not vanish when adding additional edges). From the definition of a booster, we conclude that Maker needs to add at most $n$ boosters to make her graph $M$ Hamiltonian. After three stages, which last for at most $14n$ moves, Breaker could have claimed less than $\frac{15n^2}{\ln n}<\frac{k_0}{2}$ edges in total (including the edges of $H$). So, in each of the following at most $n$ moves, Maker can claim a booster and make her graph Hamiltonian.
\end{proof}

\end{document}